\documentclass[12pt]{amsart}
\usepackage{amssymb,latexsym, amsmath, amscd, array, graphicx}

\swapnumbers \numberwithin{equation}{section}

\theoremstyle{plain}

\newtheorem{thm}{Theorem}[section]
\newtheorem{theorem}[thm]{Theorem}
\newtheorem{lem}[thm]{Lemma}

\newtheorem{prop}[thm]{Proposition}
\newtheorem{cor}[thm]{Corollary}

\theoremstyle{definition}
\newtheorem{defin}[thm]{Definition}

\newtheorem{remark}[thm]{Remark}

\newtheorem{ex}[thm]{Example}

\newtheorem{question}[thm]{Question}


\DeclareMathOperator{\Con}{{\rm Con}}
\DeclareMathOperator{\cat}{{\mbox{\rm cat$_{\rm LS}$}}}

\DeclareMathOperator{\cuplength}{{\rm cup-length}}

\DeclareMathOperator{\Int}{{\rm Int}}

\def\Int{\protect\operatorname{Int}}

\def\cat{\protect\operatorname{cat}}
\def\Cat{\protect\operatorname{Cat}}



\def\scr{\mathcal}

\def\C{{\mathbb C}}
\def\Z{{\mathbb Z}}

\def\R{{\mathbb R}}
\def\N{{\mathbb N}}

\def\1{\hbox{\rm\rlap {1}\hskip.03in{\rom I}}}
\def\Bbbone{{\rm1\mathchoice{\kern-0.25em}{\kern-0.25em}
{\kern-0.2em}{\kern-0.2em}I}}


\long\def\forget#1\forgotten{} %

\newcommand\ver[1]{\marginpar{\tiny Changed in Ver \VER}}

\date{\today}
\begin{document}

\title[On topological complexity and LS-category]{On topological complexity and LS-category }

\author[A.~Dranishnikov]{Alexander  Dranishnikov} %
\thanks{Supported by NSF, grant DMS-0904278}

\address{Alexander N. Dranishnikov, Department of Mathematics, University
of Florida, 358 Little Hall, Gainesville, FL 32611-8105, USA}
\email{dranish@math.ufl.edu}

\subjclass[2000]
{Primary 55M30; 
Secondary 57N65, 54F45}  

\begin{abstract} We present some results supporting the Iwase-Sakai conjecture about coincidence of the topological 
complexity $TC(X)$ and monoidal topological complexity $TC^M(X)$. Using these results
we provide  lower and  upper bounds for the topological complexity of the wedge $X\vee Y$.  We use these bounds to give a counterexample to the conjecture asserting that 
$TC(X')\le TC(X)$ for any covering map $p:X'\to X$. 

We discuss a possible reduction of the monoidal topological complexity to the LS-category.
Also we apply the LS-category to give a short proof of the Arnold-Kuiper theorem.
\end{abstract}

\maketitle

\section{Introduction}

Let $PX=X^{[0,1]}$ denote the  space of all paths in $X$. Let $i_X:X\to PX$ be the inclusion of $X$ into $PX$ as a subspace of constant paths.
There is a natural fibration $\pi:PX\to X\times X$ defined as $\pi(f)=(f(0),f(1))$ for $f\in PX$, $f:[0,1]\to X$.

Let $X$ be an ENR. A section $s:X\times X\to PX$ of $\pi$ is called a {\em motion planning algorithm}. We say that a motion planning algorithm $s$ has  {\em complexity} $k$
if $X\times X$ can be presented as a disjoint union $F_1\cup\dots\cup F_k$ of ENRs such that
$s$ is continuous on each $F_i$. The {\em topological complexity} $TC(X)$ of a space $X$ was defined by Farber as the minimum of $k$ such that there is a motion planning algorithm
of complexity $k$~\cite{F1}. Equivalently, $TC(X)$ is the minimal number $k$
such that $X\times X$ admits an open cover $U_1,\dots, U_k$ such that over each $U_i$ there is a continuous section of $\pi$.

We say that a motion planning algorithm $s:X\times X\to PX$ is {\em reserved} if $s|_{\Delta X}=i_X$ where $\Delta X\subset X\times X$ is the diagonal. In other words, if the initial position of a robot in the configuration space $X$
coincides with the terminal position, then the algorithm keeps the robot still. This condition on the motion planning algorithms seems to be very natural. The corresponding complexity of a space $X$ was denoted by  Iwase and Sakai as $TC^M(X)$ and was called
the {\em monoidal topological complexity} of $X$~\cite{IS1}. In the original definition they
additionally assumed that all sets $U_i$ contain the diagonal. Their definition agrees with the above since their condition always can be achieved by reduction of
an open cover $U_1,\dots, U_k$ with reserved sections $s_i$ to a closed cover $F_1,\dots, F_k$, $F_i\subset U_i$, then by
adding the diagonal to each $F_i$ with the natural extension of the sections $\bar s_i$,  and then by taking open enlargement $V_i$ of the sets $F_i\cup\Delta X$ that admit extensions of the sections $\bar s_i$.

Iwase and Saki conjectured that $TC^M(X)=TC(X)$. 
In fact, first they  gave a proof to the conjecture in \cite{IS1} and then withdrew it in \cite{IS2}. We prove this conjecture under the assumption
$TC(X)>\dim X+1$.
Also, using the Weinberger Lemma from~\cite{F3} we show that the conjecture holds true when $X$ is a Lie group.

The topological complexity is closely related to the Lusternik-Schnirelmann category $\cat(X)$ of a space  which is defined as the minimal number $k$ such that $X$ can be covered by $k$ open sets $U_1,\dots, U_k$ all contractible to a point in $X$.
We denote by $$\Cat(X)= \cat(X)-1,$$ the reduced LS-category. The reduced category appears naturally in several inequalities in the theory~\cite{CLOT}:
$$
\cuplength(X)\le \Cat(X)\le \dim (X)
$$
and

$$
\Cat(X\times Y)\le \Cat(X)+\Cat(Y).$$
In the first inequality the cup-length is taken for any reduced cohomology (possibly twisted).

Some of the formulas for $\cat$ translate to similar statements for $TC$. For example for $TC$ there is an inequality similar to the above  for the product of two spaces~\cite{F4}. Also there are analogous estimates of TC in terms of the cup product and dimension~\cite{F4}.
On the other hand, the simple $\cat$ formula for the  wedge $\cat(X\vee Y)=\max\{\cat X, \cat Y\}$ does not hold for $TC$. So far there is no  nice analog of it for TC.
The best that we can prove here is Theorem~\ref{wedge} from this paper. Another example is the formula $\cat(Y)\ge \cat (X)$ for a covering map $p:X\to Y$ which supports an intuitive idea that a covering space is always simpler than the base. So it was natural to assume that the same holds true for TC.  I've learned about this problem from Yuli Rudyak. In this paper Theorem~\ref{example} gives a negative answer  to this question.

There have been several attempts to reformulate the  topological complexity of $X$ as some modified category of a related space.
In this paper we discuss a possible characterization of the monoidal topological complexity in terms of the category.  We define a $rel\ \infty$ category $\infty\text{-}\cat(Y)$ of non-compact spaces $Y$ and
discuss the problem of coincidence between $\cat(X/A)$ and $\infty\text{-}\cat(X\setminus A)$ for a subcomplex $A\subset X$ of a finite complex $X$.
Then we show that $TC^M(X)$ is always between $\cat(X\times X)/\Delta(X)$ and $\infty\text{-}\cat(X\times X\setminus\Delta X)$.

Note that both $\cat(X)$ and $TC(X)$ are partial case of the Schwarz genus~\cite{Sch}: $\cat(X)=sg(\pi_0:P_0X\to X)$ and $TC(X)=sg(\pi:PX\to X\times X)$
where $P_0X\subset PX$ is the subspace of paths $f:[0,1]\to X$ that start in a base point $x_0\in X$, $f(0)=x_0$, and $\pi_0(f)=f(1)$.
We recall the {\em Schwarz genus}~\cite{Sch} of a fibration $p:X\to Y$ is the minimal number of open sets $U_1,\dots, U_k$ that cover $Y$ and admit sections $s_i:U_i\to X$ of $p$. In the paper we estimate the  Schwarz genus~\cite{Sch} of arbitrary fibration $p:X\to Y$
in terms the category of its mapping cone $C_p$.

Finally, we apply the LS-category to give a short proof of the Arnold-Kuiper theorem which states that the orbit space of the action of $\Z_2$ on
the complex projective plane $\C P^2$ by the conjugation is the 4-sphere. Note that this theorem was discovered by Arnold~\cite{Ar1} who published his proof  much later~\cite{Ar2}. It was proven independently by Kuiper~\cite{K} and by Massey~\cite{M}. 

The author is thankful to Michael Farber and Yuli Rudyak for helpful conversations and to Peter Landweber for valuable remarks.

\section{Monoidal topological complexity}

\begin{thm}  For ENR spaces,
$$TC(X)\le TC^M(X)\le TC(X)+1.$$
\end{thm}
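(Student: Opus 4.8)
The left-hand inequality is immediate from the definitions: a reserved motion planning algorithm of complexity $k$ is, in particular, a motion planning algorithm of complexity $k$, so $TC(X)\le TC^M(X)$.

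For the right-hand inequality the plan is to show that one extra set always suffices to upgrade an optimal cover for $TC$ to a reserved one, the new set being a neighbourhood of the diagonal on which the robot is moved out to the diagonal and then back again. Put $k=TC(X)$ and fix an open cover $U_1,\dots,U_k$ of $X\times X$ together with continuous sections $s_i\colon U_i\to PX$ of $\pi$. Since $X$ is an ENR, $X\times X$ is an ANR and $\Delta X\cong X$ is a closed ANR subspace of it; hence $\Delta X$ is a strong neighbourhood deformation retract, i.e.\ there are an open set $N\supseteq\Delta X$ and a homotopy $D\colon N\times[0,1]\to X\times X$ with $D_0=\id_N$, $D_1(N)\subseteq\Delta X$ and $D_t|_{\Delta X}=\id_{\Delta X}$ for all $t$. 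Writing $D_t(x,y)=(a_t(x,y),b_t(x,y))$, I would define a section $\sigma\colon N\to PX$ of $\pi$ by letting $\sigma(x,y)$ be the concatenation of the path $t\mapsto a_t(x,y)$, which runs from $x$ to $a_1(x,y)$, with the path $t\mapsto b_{1-t}(x,y)$, which runs from $b_1(x,y)$ to $y$; since $D_1(x,y)\in\Delta X$ we have $a_1(x,y)=b_1(x,y)$, so these paths concatenate and $\sigma(x,y)$ is a path from $x$ to $y$ depending continuously on $(x,y)$. When $x=y$ the homotopy is stationary, $D_t(x,x)=(x,x)$, hence $\sigma(x,x)$ is the constant path at $x$; thus $\sigma|_{\Delta X}=i_X$, i.e.\ $\sigma$ is reserved.

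Now choose an open set $N'$ with $\Delta X\subseteq N'\subseteq\overline{N'}\subseteq N$. Then
$$N,\quad U_1\setminus\overline{N'},\quad\dots,\quad U_k\setminus\overline{N'}$$
is an open cover of $X\times X$ by $k+1$ sets, since a point of $\overline{N'}$ lies in $N$ while a point outside $\overline{N'}$ lies in some $U_i$ and hence in $U_i\setminus\overline{N'}$. Over $N$ we use $\sigma$, over each $U_i\setminus\overline{N'}$ we use $s_i$, and the latter sets are disjoint from $\Delta X$. Passing from this open cover with sections to a disjoint decomposition of $X\times X$ into $k+1$ ENRs on which the corresponding sections are continuous — exactly the reduction behind the open-cover description of $TC$ given in the introduction, and behind the passage between the two versions of the monoidal complexity discussed there — yields a motion planning algorithm of complexity $k+1$, and it is reserved because every point of $\Delta X$ lands in a piece contained in $N$, on which the chosen section is $\sigma=i_X$. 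Hence $TC^M(X)\le TC(X)+1$.

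The heart of the argument is the construction of $\sigma$ together with the observation that, being built from the stationary restriction $D_t|_{\Delta X}$, it is automatically $i_X$ on the diagonal; the existence of the deformation-retract neighbourhood $N$ is a standard fact about ANR pairs. I expect the only point requiring genuine care to be the final passage from the open cover to an ENR decomposition carrying a reserved algorithm — one must keep track of where the diagonal ends up — but this is routine and parallels the non-monoidal case.
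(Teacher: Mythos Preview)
Your proof is correct and follows essentially the same route as the paper: add one extra open set, a neighbourhood of the diagonal carrying a reserved section built from the ANR structure, and excise a smaller closed neighbourhood of $\Delta X$ from the original $U_i$. The only cosmetic difference is that the paper invokes directly a homotopy $\phi:W\times[0,1]\to X$ between the two projections fixing the diagonal (already a reserved section), whereas you manufacture the same thing by concatenating the two coordinate tracks of a strict deformation to $\Delta X$ --- this is exactly the equivalence recorded later in Proposition~\ref{sections}.
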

This theorem was proved in~\cite{IS2}. Since the proof there is too technical we give an alternative proof.
\begin{proof}
The first inequality is obvious. Since $X$ is ANR, there  is an open neighborhood $W$ of the diagonal $\Delta X$ in $X\times X$ and a continuous map
$\phi:W\times [0,1]\to X$ such that $\phi(x,x',0)=x$, $\phi(x,x',1)=x'$, and $\phi(x,x,t)=x$ for all $t\in[0,1]$, $x,x'\in X$.
Let $U_1,\dots, U_n$ be an open cover of $X\times X$ by sets that admit sections $s_i:U_i\to PX$ of $\pi$. Let $F$ be a closed neighborhood of $\Delta X$
that lies in $W$. Then all sets in the open cover  $U_1\setminus F,\dots,U_n\setminus F, W$ of $X\times X$ admit reserved sections. Hence
$TC^M(X)\le n+1$.
\end{proof}

Note that the path fibration $\pi:PX\to X\times X$ restricted over the diagonal defines the free loop fibration $p:LX\to X$.
A {\em canonical} section $\bar s:\Delta X\to LX$ of $p$ is defined as $\bar s(x)=c_x$, where $c_x:I\to X$ is the constant map to $x$.

We use the standard convention to denote the elements of the iterated join product
$X_1\ast X_2\ast\dots\ast X_n$   as formal linear combinations $t_1x_1+t_2x_2+\dots+t_nx_n$,  $\sum t_i=1$, $t_i\ge 0$,
$x_i\in X_i$ where all summands  of the type $0x_i$  are dropped.
We use the notation $\ast^nX$ for the iterated join product of $n$ copies of
$X$ with itself.

We recall that a fiber-wise join of maps $f_i:X_i\to Y$, $i=1,\dots,n$ is the map $$f_1\tilde\ast \cdots\tilde\ast f_n:X_1\tilde\ast_Y \cdots\tilde\ast_Y X_n\to Y$$ where 
$$
X_1\tilde\ast_Y\cdots\tilde\ast_YX_n=\{t_1x_1+\dots+t_nx_n\in X_1\ast \dots\ast X_n\mid f_1(x_1)=\dots=f_n(x_2)\}
$$
is the fiber-wise join of spaces $X_1,\dots, X_n$
and $$(f_1\tilde\ast \dots\tilde\ast f_n)(t_1x_1+\dots+t_nx_n)=f_i(x_i).$$
Thus, the preimage $(f_1\tilde\ast\cdots\tilde\ast f_n)^{-1}(y)$ of a point $y\in Y$
is the join product of the preimages
$f_1^{-1}(y)\ast\dots\ast f_n^{-1}(y)$.

We define $P_nX=PX\tilde\ast_{X\times X}\cdots\tilde\ast_{X\times X}PX$ and $$\pi_n=\pi\tilde\ast\cdots\tilde\ast\pi:P_nX\to X\times X$$ 
to be the fiber-wise join product of $n$ copies of $\pi$. 
Note that there are  imbeddings $P_1X\subset P_2X\subset\dots\subset P_nX$ such that $\pi_i|_{P_{i-1}}=\pi_{i-1}$.
Then the section $\bar s:X\times X\to P_1X$ of $\pi_1$ can be regarded as a section of $\pi_n$. Also we define $p_1=p:LX\to X$, $L_nX=L_{n-1}\tilde\ast_XLX$, and
 $p_n=p_{n-1}\tilde\ast p:L_n\to X$. Note that $\pi_n^{-1}(\Delta X)\cong L_nX$ and $p_n$ is the restriction of $\pi_n$ to $\pi_n^{-1}(\Delta X)$.
Note also that the canonical section $\bar s$ defines a trivial subbundle $p'_n:E\to X$ of $p_n$ with the fiber the $(n-1)$-simplex $\Delta^{n-1}$.

We recall that a map $p:E\to B$  satisfies the {\em Homotopy Lifting Property for a pair} $(X,A)$ if for any
homotopy $H:X\times I\to B$ with a lift $H':A\times I\to E$ of the restriction $H|_{A\times I}$
and a lift $H_0$ of $H|_{X\times0}$ which agrees with $H'$, there is a lift $\bar H:X\times I\to E$
of $H$ which agrees with $H_0$ and $H'$.
The following is well-known~\cite{H}:
\begin{thm}\label{lift}
Any Hurwicz fibration $p:E\to B$  satisfies the Homotopy Lifting Property for CW complex pairs $(X,A)$.
\end{thm}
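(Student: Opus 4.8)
The plan is to bootstrap the ordinary homotopy lifting property of $p$ --- which holds with respect to the space $D^n$ for every $n\ge 0$, $p$ being a fibration --- up to the relative version for an arbitrary CW pair, in two stages: first establish the relative property for the elementary pairs $(D^n,S^{n-1})$, and then extend over the cells of $X$ not lying in $A$ by induction on dimension.

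For the first stage the crucial input is the classical homeomorphism of pairs
$$
\bigl(D^n\times I,\ D^n\times\{0\}\cup S^{n-1}\times I\bigr)\ \cong\ \bigl(D^n\times I,\ D^n\times\{0\}\bigr).
$$
Indeed $D^n\times I$ is an $(n+1)$-ball whose bounding $n$-sphere is the union of the two closed hemispheres $D^n\times\{0\}\cup S^{n-1}\times I$ and $D^n\times\{1\}$, and one may carry the first hemisphere onto $D^n\times\{0\}$ by a self-homeomorphism of the ball. Given the data of the relative homotopy lifting property for $(D^n,S^{n-1})$ --- a homotopy $H\colon D^n\times I\to B$ together with a lift $H'$ on $S^{n-1}\times I$ and a lift $H_0$ on $D^n\times\{0\}$ agreeing with $H'$ on the overlap --- I would conjugate everything by this homeomorphism; the combined partial lift is then supported on $D^n\times\{0\}$ alone, so the problem becomes an instance of the ordinary lifting property of $p$ over the base $D^n$, and transporting the resulting lift back produces a lift of $H$ extending both $H'$ and $H_0$.

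For the second stage, write $X_k=A\cup X^{(k)}$ with $X^{(k)}$ the $k$-skeleton, so that $X_{-1}=A$ and $X_k$ is obtained from $X_{k-1}$ by attaching the $k$-cells of $X$ not contained in $A$. Starting from the given lift $H'$ on $X_{-1}\times I=A\times I$, I would construct by induction on $k$ compatible lifts $\bar H_k\colon X_k\times I\to E$ of $H|_{X_k\times I}$ that extend $\bar H_{k-1}$ and restrict to $H_0$ on $X_k\times\{0\}$: for each $k$-cell with characteristic map $\phi\colon D^k\to X$, the already defined maps $\bar H_{k-1}$ on $\phi(S^{k-1})\times I$ and $H_0$ on $D^k\times\{0\}$ agree on $S^{k-1}\times\{0\}$ and so patch to a partial lift over $D^k\times\{0\}\cup S^{k-1}\times I$ of $H\circ(\phi\times\id_I)$, which extends over all of $D^k\times I$ by the first stage; since $X_k\times I$ is the evident pushout, these cellwise extensions glue to a continuous $\bar H_k$. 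Then $\bar H=\bigcup_k\bar H_k\colon X\times I\to E$ is the desired lift: it is continuous because $X\times I$ carries the weak topology determined by its subcomplexes $X_k\times I$, and by construction $p\bar H=H$, $\bar H|_{X\times\{0\}}=H_0$, and $\bar H|_{A\times I}=H'$.

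The one ingredient here that is not routine bookkeeping is the homeomorphism of pairs in the first stage --- equivalently, the fact that $D^n\times\{0\}\cup S^{n-1}\times I$ is a tamely embedded hemisphere of $\partial(D^n\times I)$ --- so that is where I expect the real content to sit; everything after it is the standard skeleton-by-skeleton extension. An essentially equivalent alternative would begin from the observation that, $A\hookrightarrow X$ being a cofibration, $X\times\{0\}\cup A\times I$ is a strong deformation retract of $X\times I$, and then feed that retraction into the lifting property of $p$; but one still has to arrange that the produced lift is fixed over $X\times\{0\}\cup A\times I$, which is precisely the point at which the hemisphere homeomorphism does its work. As the statement is entirely standard, I would ultimately be content to cite it from~\cite{H}.
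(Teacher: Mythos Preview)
Your argument is correct and is exactly the standard proof found in~\cite{H}. The paper itself gives no proof at all --- it simply states the result as well-known and cites~\cite{H} --- so your final remark that you would ``ultimately be content to cite it from~\cite{H}'' already matches the paper's treatment precisely; the detailed sketch you supply is a bonus.
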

\begin{cor}\label{HLP}
Let $p:E\to X$ be a Hurewicz fibration with a section $s:X\to E$. A fiber-wise homotopy $G:A\times I\to E$ of the restriction $s|_A$ to a closed subset $A\subset X$ can be extended to a fiber-wise homotopy $\bar G:X\to E$ of $s$ provided $(X,A)$ is a CW complex pair.
\end{cor}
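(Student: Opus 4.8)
The plan is to deduce this directly from \theoref{lift} by making the right choice of the base homotopy. Take $H:X\times I\to X$ to be the constant homotopy $H(x,t)=x$ of the identity map $\id_X$. The section $s:X\to E$ will play the role of the lift $H_0$ of $H|_{X\times0}$, since $p\circ s=\id_X$. The given fiber-wise homotopy $G:A\times I\to E$ will play the role of the partial lift $H':A\times I\to E$: because $G$ is fiber-wise we have $p\circ G(a,t)=a=H(a,t)$, so $G$ is indeed a lift of $H|_{A\times I}$, and because $G$ is a homotopy of $s|_A$ we have $G(a,0)=s(a)$, i.e.\ $H_0$ and $H'$ agree on $A\times0$, as required in the hypotheses of \theoref{lift}.

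Since $(X,A)$ is a CW complex pair, \theoref{lift} now yields a lift $\bar H:X\times I\to E$ of $H$ that agrees with $H_0$ on $X\times0$ and with $H'$ on $A\times I$. Set $\bar G:=\bar H$. Then $p\circ\bar G(x,t)=H(x,t)=x$ for all $(x,t)$, so $\bar G$ is a fiber-wise homotopy; $\bar G(x,0)=H_0(x)=s(x)$, so it is a homotopy of $s$; and $\bar G|_{A\times I}=H'=G$, so it extends $G$. This is exactly the asserted extension.

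The only point that needs any care is the bookkeeping of the hypotheses of \theoref{lift}: one must observe that "$G$ is a fiber-wise homotopy" is precisely the statement that $G$ is a lift of the trivial homotopy $H|_{A\times I}$, that "$G$ is a homotopy of $s|_A$" is precisely the compatibility of $H_0$ and $H'$ on $A\times0$, and, conversely, that a lift of the constant homotopy $H(x,t)=x$ is automatically fiber-wise. Once these identifications are made there is no real obstacle; the content of the corollary is entirely carried by \theoref{lift}, which already packages the relative lifting statement (for the pair $(X,A)$, not merely for $X$ alone) that we need here.
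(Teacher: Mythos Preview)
Your proof is correct and is exactly the intended derivation: the paper states this as an immediate corollary of \theoref{lift} without spelling out the argument, and your proof supplies precisely the bookkeeping (take the base homotopy to be the constant one, so that ``lift of $H$'' coincides with ``fiber-wise'') needed to see why it follows.
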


\begin{prop}\label{ganea} For CW complexes $X$,

(1) $TC(M)\le n\ \Leftrightarrow\ \  \pi_n:P_nX\to X\times X$ admits a section.

(2) $TC^M(M)\le n\ \Leftrightarrow\ \pi_n:P_nX\to X\times X$ admits a section $s$ which agrees with the canonical section over the diagonal
$s|_{\Delta X}=\bar s$.
\end{prop}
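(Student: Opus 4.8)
The plan is to reduce this to the classical Schwarz genus characterization via fiber-wise joins, which is exactly the Ganea–Schwarz theorem (and its analog for $TC$ due to Farber). Recall that for any fibration $f : E \to B$, the Schwarz genus $sg(f) \le n$ if and only if the $n$-fold fiber-wise join $f_n : E_n \to B$ admits a section; this is the standard result from \cite{Sch}. Applying this to $\pi : PX \to X \times X$ immediately gives part (1), since $TC(X) = sg(\pi)$ and $\pi_n$ is by definition the $n$-fold fiber-wise join of $\pi$. So (1) requires essentially no new work beyond invoking \cite{Sch}.

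For part (2) the forward direction is the substantive one. First I would suppose $TC^M(X) \le n$, so there is an open cover $U_1, \dots, U_n$ of $X \times X$, each containing $\Delta X$, with reserved sections $s_i : U_i \to PX$, meaning $s_i|_{\Delta X} = i_X$. The classical construction producing a section of the fiber-wise join from a genus-$n$ cover goes: take a partition of unity $\{\varphi_i\}$ subordinate to $\{U_i\}$ and set $s(b) = \sum_i \varphi_i(b)\, s_i(b)$, interpreted in the fiber-wise join; this is well-defined because the summands with $\varphi_i(b) = 0$ are dropped. The key observation is that on the diagonal, \emph{every} $s_i$ that is defined agrees with the canonical constant-path section, so the convex combination $\sum \varphi_i(x,x) s_i(x,x)$ is a combination of copies of the single point $c_x \in L_nX \subset P_nX$, which in the join is just $c_x$ itself. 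Hence $s|_{\Delta X} = \bar s$, as required.

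For the reverse direction of (2), I would argue as follows. Given a section $s : X \times X \to P_nX$ with $s|_{\Delta X} = \bar s$, the standard genus argument recovers an open cover $W_1, \dots, W_n$ of $X \times X$ with sections $t_i : W_i \to PX$ of $\pi$ (namely $W_i$ is the set where the $i$-th barycentric coordinate of $s$ is positive, with $t_i$ the $i$-th component path). The issue is that these $t_i$ need not be reserved. But by hypothesis the section $s$ restricted to $\Delta X$ lands in the trivial $\Delta^{n-1}$-subbundle $p'_n : E \to X$ spanned by the canonical section $\bar s$ — indeed it \emph{equals} $\bar s$ there. Using Corollary~\ref{HLP} (the homotopy lifting property giving extension of fiber-wise homotopies from a closed subset), I can deform $s$ near the diagonal so that a neighborhood $W_{n+1}$ of $\Delta X$ carries the reserved canonical section, and simultaneously push the other $W_i$ off the diagonal; alternatively, one directly modifies the cover as in the proof of Theorem~2.1, replacing $W_i$ by $W_i \setminus F$ for a closed neighborhood $F$ of $\Delta X$ and adjoining $F$'s interior with the canonical reserved section. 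One must check this still yields only $n$ sets, which works precisely because on $\Delta X$ the section already coincides with $\bar s$, so no extra open set is needed to cover the diagonal — the $W_i$ that met $\Delta X$ can be retracted.

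The main obstacle is the reverse direction of (2): making sure the modification of the cover near $\Delta X$ does not increase the number of open sets, i.e., genuinely using the hypothesis $s|_{\Delta X} = \bar s$ rather than merely "$s$ restricted to $\Delta X$ lands in $E$." The cleanest route is probably to observe that since $s$ agrees with $\bar s$ on the closed set $\Delta X$ and $\bar s$ is a global section of $\pi_n$, one can apply Corollary~\ref{HLP} to fiber-wise-homotope $s$ (rel nothing, over all of $X \times X$) to a section $s'$ that literally equals $\bar s$ on an open neighborhood of $\Delta X$; then the open cover extracted from $s'$ automatically has the property that one of its sets is a diagonal neighborhood with the reserved canonical section, and the reserved property of the remaining sections follows because on the overlap with the diagonal neighborhood one can switch to the canonical section. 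Counting gives exactly $n$ reserved sections, so $TC^M(X) \le n$.
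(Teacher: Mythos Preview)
Your treatment of (1) matches the paper's. For (2), however, you have the two directions reversed in difficulty, and your forward direction contains a genuine error.

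The implication $\Leftarrow$ is in fact immediate (the paper calls it ``obvious''): since $\bar s$ takes values in $P_1X\subset P_nX$, a section $s$ with $s|_{\Delta X}=\bar s$ has, on the diagonal, first barycentric coordinate equal to $1$ and all others equal to $0$. Hence in the standard extraction $W_i=\{\text{$i$-th coordinate of }s>0\}$ with sections $t_i$, the diagonal lies entirely in $W_1$ with $t_1(x,x)=c_x$, while $W_i\cap\Delta X=\emptyset$ for $i>1$. All $n$ sections are therefore reserved and $TC^M(X)\le n$. Your long argument here, introducing an auxiliary set $W_{n+1}$ and then trying to ``retract'' back to $n$ sets, is unnecessary.

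The implication $\Rightarrow$ is where the actual work is, and your argument has a gap. The claim that $\sum_i \varphi_i(x,x)\, c_x$ ``in the join is just $c_x$ itself'' is false: in the iterated join a formal combination $t_1c_x+\dots+t_nc_x$ with the \emph{same} loop $c_x$ placed in \emph{different} join factors is not a single point but a point of the $(n-1)$-simplex spanned by those factors (the join of $n$ one-point spaces is $\Delta^{n-1}$, not a point). So the partition-of-unity section $s$ only satisfies $s(\Delta X)\subset E$, the trivial $\Delta^{n-1}$-subbundle, not $s|_{\Delta X}=\bar s$. The paper's remedy is precisely the tool you invoke---but in the wrong direction: since $E\to X$ is a trivial $\Delta^{n-1}$-bundle, $s|_{\Delta X}$ can be fiber-wise deformed to $\bar s$ inside $E$, and Corollary~\ref{HLP} then extends this fiber-wise homotopy from the closed set $\Delta X$ to all of $X\times X$, producing the desired section.
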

\begin{proof}
The statement (1) is a part of a general theorem proven by Schwartz~\cite{Sch} for fibrations $q:X\to Y$:
$sg(q)\le n$ if and only if the $n$-fold iterated fiber-wise join product $\tilde\ast^nq:\tilde\ast^n_YX\to Y$ admits a section.

The implication $\Leftarrow$ in (2) is obvious. For the other direction we note that $n$ reserved sections $s_i:U_i\to PX$ defined for an open cover
$U_1,\dots, U_n$ of $X\times X$ define a section $s$ of $\pi_n$ with the image $s(\Delta X)$ lying in $E$. Therefore over $\Delta X$ it could be fiber-wise deformed to $\bar s$. By Proposition~\ref{lift} that deformation
can be extended to a fiber-wise deformation of $s$.
\end{proof}

\begin{thm}\label{M} The equality
$$TC(X)= TC^M(X)$$
holds true 
for $k$-connected simplicial complexes $X$ such that $$(k+1)TC(X)>\dim(X)+1.$$
\end{thm}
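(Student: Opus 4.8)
The strategy is to combine \propref{ganea} with elementary obstruction theory, using that the inequalities $TC(X)\le TC^M(X)\le TC(X)+1$ are already in hand. By \propref{ganea}(2) it suffices to show that whenever $TC(X)\le n$ the fibration $\pi_n:P_nX\to X\times X$ admits a section that agrees with the canonical section $\bar s$ over $\Delta X$; applying this with $n=TC(X)$ then forces $TC^M(X)\le TC(X)$, and hence equality. So fix a section $s$ of $\pi_n$ (which exists by \propref{ganea}(1) since $TC(X)\le n$) and consider its restriction $\sigma=s|_{\Delta X}$, a section of $p_n:L_nX\to X$ under the identification $\Delta X\cong X$.

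The core step is to push $\sigma$, through fibre-wise homotopies of sections of $p_n$, into the trivial subbundle $p'_n:E\to X$ with fibre $\Delta^{n-1}$ determined by $\bar s$. Once $\sigma$ has been homotoped into $E$, the fact that $E\to X$ has contractible fibres (a trivial $\Delta^{n-1}$-bundle) lets us further homotope it inside $E$ to $\bar s$ itself by a straight-line homotopy in the simplex. Concatenating these fibre-wise homotopies of $\sigma$ and extending the result to a fibre-wise homotopy of $s$ over the whole of $X\times X$ via \corref{HLP} (with closed subset $A=\Delta X$, noting that $(X\times X,\Delta X)$ is a CW pair since $X$ is a simplicial complex) produces a section $s'$ of $\pi_n$ with $s'|_{\Delta X}=\bar s$, which is exactly what \propref{ganea}(2) needs.

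To effect the compression into $E$ I would run obstruction theory in the fibres. Restricted over $\Delta X$ the path fibration becomes the free loop fibration $p:LX\to X$, whose fibre over a point is the based loop space $\Omega X$; since $X$ is $k$-connected, $\Omega X$ is $(k-1)$-connected. Hence the fibre of $p_n$ is the $n$-fold join $\ast^n\Omega X$, and by the standard estimate that a join of an $a$-connected and a $b$-connected space is $(a+b+2)$-connected, $\ast^n\Omega X$ is $\bigl(n(k-1)+2(n-1)\bigr)$-connected, i.e. $\bigl(n(k+1)-2\bigr)$-connected. Since the fibre of $p'_n$ is the contractible simplex $\Delta^{n-1}$, the fibre pair $\bigl(\ast^n\Omega X,\Delta^{n-1}\bigr)$ is also $\bigl(n(k+1)-2\bigr)$-connected. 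The successive obstructions to deforming the section $\sigma$ into the subfibration $E$ lie in $H^i\bigl(X;\pi_i(\ast^n\Omega X,\Delta^{n-1})\bigr)$ with (possibly twisted) coefficients; these vanish for $i\le n(k+1)-2$ by connectivity and for $i>\dim X$ for dimensional reasons, so all of them vanish once $\dim X\le n(k+1)-2$, equivalently once $\dim X+1<(k+1)n$. For $n=TC(X)$ this is precisely the hypothesis $(k+1)TC(X)>\dim X+1$, so the required deformation exists.

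The only genuine obstacle is making this fibre-wise obstruction-theoretic step precise: one has to verify that the obstructions to homotoping a section into a subfibration really live in $H^i(X;\pi_i(F,F_0))$ for the fibre pair $(F,F_0)=(\ast^n\Omega X,\Delta^{n-1})$, with due care for the $\pi_1(X)$-action on the coefficient systems, and to check the routine point that $p_n$ and $p'_n$ are honest Hurewicz fibrations over $X$ so that \corref{HLP} applies in the final extension step. The remaining ingredients — the join-connectivity bound and the concatenation of fibre-wise homotopies — are straightforward bookkeeping.
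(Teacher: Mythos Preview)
Your proposal is correct and follows essentially the same route as the paper. Both arguments use \propref{ganea} to reduce to deforming a section of $p_n$ over $\Delta X$ to the canonical section $\bar s$, invoke the connectivity bound $(n(k+1)-2)$ for the fibre $\ast^n\Omega X$, carry out an obstruction-theoretic deformation over the skeleta of $X$, and then extend to $X\times X$ via \corref{HLP}; the only cosmetic difference is that the paper writes out the skeletal induction directly, whereas you phrase it as compression into the trivial $\Delta^{n-1}$-subbundle $E$ followed by a straight-line homotopy (which, since $\Delta^{n-1}$ is contractible, amounts to the same vanishing of obstructions).
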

\begin{proof}
Let $TC(X)=n$. Note that the fiber $\pi^{-1}(x,x')$ is homotopy equivalent to the loop space $\Omega(X)$. Since $\Omega(X)$ is $(k-1)$-connected, the iterated join product $\ast^n\Omega(X)$ is $((k+1)n-2)$-connected.
We show that any section $s:\Delta X\to L_nX$ can be fiber-wise joined by a homotopy with a canonical section $\bar s:\Delta X\to L_nX$.
By induction on $i$ we construct a section $s_i:X\to L_nX$, that coincides with $\bar s$ on the $i$-skeleton $X^{(i)}$, together with a fiber-wise
homotopy joining $s$ and $s_i$.  Here we use the identification $\Delta X= X$. For $i=0$ we take paths in the fibers $p_n^{-1}(v)$ joining $s(v)$ and $\bar s(v)$ for all $v\in X^{(0)}$.
Then we extend them to a fiber-wise homotopy of $s$ to a section $s_0$. Assume that $s_{i-1}$ is already constructed and $i\le\dim X\le (k+1)n-2$. 
Independently for every $i$-simplex 
$\sigma\subset X$ we consider the problem of joining $s_{i-1}$ with $\bar s$ over $\sigma$ by a fiber-wise homotopy. Since the fiber bundle $p_n$ is trivial over $\sigma$
with a $i$-connected fiber, the identity homotopy on the boundary $\partial\sigma$ can be extended to a homotopy between $\bar s|_{\sigma}$ and $s_{i-1}|_{\sigma}$. This extension can be deformed to a fiber-wise  homotopy. All these homotopies together define a fiber-wise  homotopy between $s_{i-1}$ and $\bar s$
over $X^{(i)}$. Since $(X,X^{(i)})$ is a CW pair, by Proposition~\ref{lift} we can extend it to a fiber-wise homotopy over $X$.

Let $s:X\times X\to P_nX$ be a section. On $\Delta X$ it can be deformed to a canonical section $\bar s$. Since $(X\times X,\Delta X)$ is a CW pair, by Proposition~\ref{lift} there is a fiber-wise homotopy of $s$ to a section $s'$ that coincides with $\bar s$ on $\Delta X$. Therefore, $TC^M(X)\le n$.
\end{proof}

\begin{cor}
$TC(S^m)=TC^M(S^m)$ for all $m$.
\end{cor}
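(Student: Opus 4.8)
The plan is to obtain the corollary from \theoref{M} as soon as $m\ge 2$, and to dispose of the circle $S^1$ separately and by hand.

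First I would record the only fact about $TC$ of spheres that is needed, namely $TC(S^m)\ge 2$ for every $m\ge 1$: indeed $TC(Y)=1$ would force $Y$ to be contractible (a global section of $\pi:PY\to Y\times Y$ restricted to $\{y_0\}\times Y$ contracts $Y$), and no positive-dimensional sphere is contractible; see \cite{F1}. Next, for $m\ge 2$ the sphere $S^m$ is a triangulable $(m-1)$-connected complex with $\dim S^m=m$, so putting $k=m-1$ we get
\[
(k+1)\,TC(S^m)=m\cdot TC(S^m)\ge 2m\ge m+2>\dim(S^m)+1,
\]
which is exactly the hypothesis of \theoref{M}. Hence $TC(S^m)=TC^M(S^m)$ for all $m\ge 2$.

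It then remains to treat $m=1$, where \theoref{M} is unavailable: the largest $k$ for which $S^1$ is $k$-connected is $k=0$, and then $(k+1)\,TC(S^1)=TC(S^1)=2$ fails to exceed $\dim(S^1)+1=2$. Here I would instead write down a reserved motion planning algorithm of complexity $2$ on $S^1$, regarded as the unit circle in $\C$. Decompose $S^1\times S^1$ as the disjoint union of the open set $F_1=\{(x,y): y\ne -x\}$ and the closed antidiagonal $F_2=\{(x,y): y=-x\}$; both are ENRs (an open subset of the torus, and a circle). On $F_1$ let $s(x,y)$ be the unique minimal geodesic arc from $x$ to $y$; this is continuous on $F_1$ and equals the constant path $c_x$ when $x=y$, so, since $\Delta S^1\subset F_1$, the algorithm is reserved. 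On $F_2$ let $s(x,y)$ be, say, the counterclockwise half-circle from $x$ to $y=-x$; this depends continuously on $(x,y)\in F_2$ and imposes no further constraint, as $F_2$ is disjoint from the diagonal. Thus $TC^M(S^1)\le 2$, and combined with $2\le TC(S^1)\le TC^M(S^1)$ (the inequality $TC\le TC^M$ being the first inequality of the opening theorem) this yields $TC(S^1)=TC^M(S^1)$. The case $m=1$ could also be absorbed into the Lie-group case mentioned in the introduction, since $S^1$ is a Lie group.

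I do not expect a genuine obstacle: one half is a direct application of \theoref{M} and the other an elementary construction. The one point deserving attention is the arithmetic of the hypothesis $(k+1)TC>\dim+1$ — it holds for every $m\ge 2$ (it reduces to $2m>m+1$) and fails precisely at $m=1$, which is exactly why the circle must be handled separately — together with the routine checks that on $F_1$ the minimal geodesic really is single-valued and continuous and that the half-circle family on $F_2$ varies continuously.
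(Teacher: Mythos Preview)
Your argument is correct and matches the paper's intended route: the corollary is stated immediately after \theoref{M} with no separate proof, so for $m\ge 2$ you are doing exactly what the paper has in mind. Your explicit treatment of $S^1$---where, as you observe, the hypothesis of \theoref{M} genuinely fails---is more careful than the paper, which leaves that case implicit; either your direct reserved algorithm or the appeal to \lemref{weinberger} (since $S^1$ is a Lie group) disposes of it.
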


The following is an extension of Weinberger's Lemma from~\cite{F3} to the case of monoidal topological complexity.
\begin{lem}\label{weinberger} 
For a connected Lie group $G$,
$$
TC(G)=TC^M(G)=\cat (G).$$
\end{lem}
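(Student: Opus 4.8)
The plan is to derive the Lemma from its non‑monoidal counterpart together with the Ganea‑type criterion of \propref{ganea}(2), exploiting the homogeneity of $G$. Recall that $\cat(G)$ is the Schwarz genus $sg(p_0)$ of the based path fibration $p_0\colon P_0G\to G$, $p_0(f)=f(1)$, where $P_0G=\{f\in PG\mid f(0)=e\}$ and $e\in G$ is the identity, and that Weinberger's Lemma from \cite{F3} asserts $TC(G)=\cat(G)$. Since a reserved motion planning algorithm is in particular a motion planning algorithm, $TC(G)\le TC^M(G)$, so it remains only to prove $TC^M(G)\le\cat(G)$.

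Set $n=\cat(G)$. Homogeneity enters through the map $\mu\colon G\times G\to G$, $\mu(g,h)=g^{-1}h$: left translation $\gamma\mapsto\bigl(\pi(\gamma),\,\gamma(0)^{-1}\gamma\bigr)$ is a fiber-wise homeomorphism of $\pi\colon PG\to G\times G$ onto the pullback $\mu^{*}(p_0)$ (inverse $(g,h,f)\mapsto gf$), and this is exactly the mechanism behind $TC(G)\le\cat(G)$, since a section of $p_0$ over $V\subset G$ pulls back to a section of $\pi$ over $\mu^{-1}(V)$. Fiber-wise joins commute with pullbacks, so the same identification yields a fiber-wise homeomorphism $P_nG\cong\mu^{*}\bigl(\tilde\ast^{\,n}_{G}P_0G\bigr)$ over $G\times G$, where $\tilde\ast^{\,n}p_0$ denotes Schwarz's $n$-fold fiber-wise join of $p_0$. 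By $\cat(G)=sg(p_0)\le n$ and Schwarz's theorem \cite{Sch}, $\tilde\ast^{\,n}p_0$ admits a global section $\sigma\colon G\to\tilde\ast^{\,n}_{G}P_0G$; its pullback $\mu^{*}\sigma$ is then a section of $\pi_n\colon P_nG\to G\times G$.

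It remains to correct $\mu^{*}\sigma$ over the diagonal. Left translation also trivializes $\pi_n$ over $\Delta G$: the fiber $\pi^{-1}(x,x)$ is the space of loops at $x$, carried by $\gamma\mapsto x^{-1}\gamma$ onto $\Omega_eG:=p_0^{-1}(e)$, so $\pi_n^{-1}(\Delta G)\cong\Delta G\times\bigl(\ast^{n}\Omega_eG\bigr)$. Under this trivialization $\mu^{*}\sigma$ restricts over $\Delta G$ to the \emph{constant} section with value $\sigma(e)\in\ast^{n}\Omega_eG$, whereas the canonical section $\bar s$ corresponds to the constant section with value the single vertex $1\cdot c_e$, because $\bar s(x)=c_x$ is translated to $x^{-1}c_x=c_e$; here $c_e$ is the constant loop at $e$. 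Now $\ast^{n}\Omega_eG$ is path-connected — automatically so when $n\ge2$, while if $n=1$ then $\cat(G)=1$, so $G$ is contractible and $\Omega_eG$ is contractible. Choosing a path from $\sigma(e)$ to $1\cdot c_e$ in $\ast^{n}\Omega_eG$ and sliding the constant value along it produces a fiber-wise homotopy of $\mu^{*}\sigma|_{\Delta G}$ to $\bar s$. Since $\pi_n$ is a Hurewicz fibration and $(G\times G,\Delta G)$ is a CW pair ($G$, being a Lie group, is triangulable with $\Delta G$ a subcomplex of a suitable triangulation of $G\times G$), \corref{HLP} extends this to a fiber-wise homotopy of $\mu^{*}\sigma$ to a section $s'$ of $\pi_n$ with $s'|_{\Delta G}=\bar s$; by \propref{ganea}(2), $TC^M(G)\le n=\cat(G)$, and together with $\cat(G)=TC(G)\le TC^M(G)$ this proves the Lemma.

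The part I expect to carry the actual content is the bookkeeping at the start of the last paragraph — verifying that under the translation homeomorphism $P_nG\cong\mu^{*}(\tilde\ast^{\,n}_{G}P_0G)$, and the induced trivialization over $\Delta G$, the canonical section is precisely the constant section at the vertex $1\cdot c_e$ while $\mu^{*}\sigma|_{\Delta G}$ is constant. Once this is in place the rest is soft: because the group structure simultaneously trivializes $\pi_n$ over the diagonal and makes the relevant restricted section constant, one needs $\ast^{n}\Omega_eG$ only to be path-connected rather than highly connected, and so, in contrast with \theoref{M}, there is no skeleton-by-skeleton induction and no connectivity or dimension hypothesis on $G$.
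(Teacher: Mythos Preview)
Your argument is correct, but it is a genuinely different route from the paper's. The paper stays at the open-cover level and simply tweaks Weinberger's original construction: choose a categorical cover $U_1,\dots,U_n$ of $G$ with contractions $H_i$ to $e$, arrange that $e\notin U_i$ for $i>1$ and that $H_1(e,t)\equiv e$ (using that $\{e\}\hookrightarrow G$ is a cofibration), and then observe that Farber's sections $s_i(a,b)(t)=a\,H_i(a^{-1}b,t)$ on $W_i=\mu^{-1}(U_i)$ are already reserved, since $\Delta G\cap W_i=\emptyset$ for $i>1$ and $s_1(a,a)(t)=aH_1(e,t)=a$. That is the whole proof. Your approach instead passes through the Ganea criterion of \propref{ganea}(2): pull back a Schwarz section of $\tilde\ast^{\,n}p_0$ along $\mu$, trivialize over $\Delta G$ by left translation, and use path-connectedness of $\ast^n\Omega_eG$ plus \corref{HLP} to straighten the section to $\bar s$ on the diagonal. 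The paper's argument is shorter and needs no join machinery; yours is more conceptual and makes transparent, by comparison with \theoref{M}, exactly how the group structure replaces the high connectivity of the fiber --- it collapses the skeleton-by-skeleton obstruction argument to a single $\pi_0$ statement.
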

\begin{proof} In view of what is already known~\cite{F3}, it suffices to show the inequality $TC^M(G)\le \cat(G)$.
Let $\cat(G)=n$ and let $U_1,\dots, U_n$ be an open cover of $G$ together with homotopies $H_i:U_i\times[0,1]\to G$ contracting $U_i$ to the unit $e\in G$.
Clearly, we may assume that $e\notin U_i$ for $i>1$. Since the inclusion $e\in G$ is a cofibration, we may assume that $H_1(e,t)=e$ for all $t$. Then for the open cover of $G\times G$ as defined in~\cite{F3}
$$W_i=\{(a,b)\in G\times G\mid a^{-1}b\in U_i\}$$ the sections $s_i:W_i\to PG$ defined as
$$
s_i(a,b)(t)=ah_i(a^{-1}b,t)\in G,\ \ \ (a,b)\in W_i
$$
are reserved.
Indeed, $\Delta G\cap W_i=\emptyset$ for $i>1$ and $$s_1(a,a)(t)=ah_1(a^{-1}a,t)=ah_1(e,t)=ae=a$$ for all $(a,a)\in \Delta G$.
\end{proof}

\section{Topological complexity of wedge and covering maps}

A {\em deformation} of $U\subset Z$ in $Z$ to a subset $A\subset Z$ is a continuous map 
$D:U\times I\to Z$ such that: $D(u,0)=u$, $D(u,1)\in A$ for all $u\in U$.
A {\em strict deformation} of $U\subset Z$ in $Z$ to $A\subset Z$ is a deformation
$D:U\times I\to Z$ such that
$D(u,t)=u$ for all $t\in I$ whenever $u\in A$. 

\begin{prop}\label{sections} 
Let $X$ be a metric space.
For an open set $U\subset X\times X$ the following are equivalent:

(1) There is a reserved section $s:U\to PX$ over $U$ of the fibration $\pi:PX\to X\times X$.

(2) There is a strict deformation $D:U\times I\to X\times X$ to the diagonal $\Delta X=\{(x,x)\in X\times X\mid x\in X\}$ 

(3) For any choice of a base point $x_0\in X$ there is a strict deformation $D$ of $U$ to $\Delta X$ which preserves faces
$X\times x_0$ and $x_0\times X$, i.e.,
for all  $t\in I$,
$$D((x,x_0),t)\in X\times x_0\ \ \text{and}\ \ \ D((x_0,x),t)\in x_0\times X.$$ 
\end{prop}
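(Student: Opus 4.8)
The plan is to prove the cycle of implications $(1)\Rightarrow(3)\Rightarrow(2)\Rightarrow(1)$. The implication $(3)\Rightarrow(2)$ is trivial: pick any base point $x_0$ and discard the face condition. So the content lies in $(2)\Rightarrow(1)$ and, above all, in $(1)\Rightarrow(3)$.

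For $(2)\Rightarrow(1)$ I would write a strict deformation $D:U\times I\to X\times X$ in coordinates as $D=(D_1,D_2)$, so that $D_1((x,x'),0)=x$, $D_2((x,x'),0)=x'$, and $D_1((x,x'),1)=D_2((x,x'),1)$ because $D((x,x'),1)\in\Delta X$. Then define $s(x,x')\in PX$ to be the path that first follows $r\mapsto D_1((x,x'),r)$ from $x$ to the common value at $r=1$ and then follows $r\mapsto D_2((x,x'),r)$ backwards from there to $x'$; explicitly $s(x,x')(u)=D_1((x,x'),2u)$ for $u\le\tfrac12$ and $s(x,x')(u)=D_2((x,x'),2-2u)$ for $u\ge\tfrac12$. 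The two branches agree at $u=\tfrac12$, the endpoints are $x$ and $x'$, and if $x=x'$ strictness of $D$ forces $D_1((x,x),r)\equiv x\equiv D_2((x,x),r)$, so $s(x,x)$ is the constant path and $s$ is reserved; continuity of $s:U\to PX$ is the exponential correspondence, $I$ being locally compact. Hence $s$ is a reserved section over $U$.

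The main step is $(1)\Rightarrow(3)$. Fix $x_0$ and a reserved section $s$, and for $(x,x')$ write $\gamma=s(x,x')$. There are two obvious strict deformations to $\Delta X$: $(\gamma(t),x')$, which keeps the second coordinate fixed and so preserves $X\times x_0$ but in general not $x_0\times X$, and $(x,\gamma(1-t))$, which does the reverse; neither alone suffices. The idea is to interpolate between them by a scalar $\lambda(x,x')\in[0,1]$ that simultaneously records the point of $\gamma$ at which the two coordinates should meet. Put $V=U\setminus\{(x_0,x_0)\}$; then $A=(X\times\{x_0\})\cap V$ and $B=(\{x_0\}\times X)\cap V$ are disjoint closed subsets of the metric space $V$, so there is a continuous $\lambda:V\to[0,1]$ with $\lambda\equiv1$ on $A$ and $\lambda\equiv0$ on $B$ (for instance $\lambda=\dist(\cdot,B)/(\dist(\cdot,A)+\dist(\cdot,B))$); set $\lambda(x_0,x_0)$ arbitrarily. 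Now define
$$D((x,x'),t)=\bigl( s(x,x')(t\lambda(x,x')),\ s(x,x')(1-t(1-\lambda(x,x'))) \bigr).$$
Then $D((x,x'),0)=(x,x')$; at $t=1$ both coordinates equal $s(x,x')(\lambda(x,x'))$, so $D((x,x'),1)\in\Delta X$; on $\Delta X$ the path is constant, so $D$ is stationary and the deformation is strict; on $A$ we have $\lambda=1$, hence $D((x,x_0),t)=(s(x,x_0)(t),x_0)\in X\times x_0$; on $B$ we have $\lambda=0$, hence $D((x_0,x),t)=(x_0,s(x_0,x)(1-t))\in x_0\times X$.

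The one delicate point — and the place I expect to be the main obstacle — is continuity of $D$ at the point $(x_0,x_0)$ when it belongs to $U$, since $\lambda$ need not extend continuously there. This is where metrizability of $X$ is used: if $(x_n,x_n',t_n)\to(x_0,x_0,t_0)$, then $s(x_n,x_n')\to s(x_0,x_0)$, the constant path at $x_0$, uniformly on $I$; hence $s(x_n,x_n')(u_n)\to x_0$ for \emph{every} sequence $u_n\in I$, and applying this to the two parameters in the formula gives $D((x_n,x_n'),t_n)\to(x_0,x_0)=D((x_0,x_0),t_0)$. Away from $(x_0,x_0)$ the map $D$ is a composition of continuous maps (continuity of $\lambda$ on $V$, of $s$ into the compact-open topology, and of the evaluation $PX\times I\to X$). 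This gives $(1)\Rightarrow(3)$ and closes the cycle.
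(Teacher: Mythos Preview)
Your proof is correct and follows essentially the same route as the paper. In particular, for $(1)\Rightarrow(3)$ the paper writes down the very same formula
\[
D((x,y),t)=\bigl(s(x,y)(\lambda t),\ s(x,y)(1-(1-\lambda)t)\bigr),
\]
only with the specific choice $\lambda(x,y)=\dfrac{d(x,x_0)}{d(x,x_0)+d(y,x_0)}$, which is just a concrete Urysohn separator for your sets $A$ and $B$; the continuity argument at $(x_0,x_0)$ and the construction for $(2)\Rightarrow(1)$ are identical to yours.
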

\begin{proof} (1) $\Rightarrow$ (3). Let $\|x\|=d(x,x_0)$. We define
$$
D((x,y),t)=(s(x,y)(\frac{\|x\|}{\|x\|+\|y\|}t),s(x,y)(1-\frac{\|y\|}{\|x\|+\|y\|}t)
$$
if $(x,y)\ne (x_0,x_0)$ and define $D((x_0,x_0),t)=(x_0,x_0)$. Since $s(x,y)(0)=x$ and $s(x,y)(1)=y$, we obtain that $D((x,y),0)=(x,y)$.
Note that $$D((x,y),1)=(s(x,y)(\frac{\|x\|}{\|x\|+\|y\|}),s(x,y)(\frac{\|x\|}{\|x\|+\|y\|})\in\Delta X.$$ Since the section $s$ is reserved, $D((x,x),t)=(s(x,x)(t/2),s(x,x)(t/2))=(x,x)$. Note that $$D((x,x_0),t)=(s(x,x_0)(t),s(x,x_0)(1))=(s(x,x_0)(t),x_0)\in X\times x_0$$ and
$$D((x_0,y),t)=(s(x_0,y)(0),s(x_0,y)(1-t))=(x_0,s(x_0,y)(1-t))\in x_0\times X.$$
The deformation $D$ is continuous at $(x_0,x_0)$ ( if defined) since the section $s(x_0,x_0)$ is stationary at $(x_0,x_0)$.

(3) $\Rightarrow$ (2) is obvious.

(2) $\Rightarrow$ (1). Let $pr_1:X\times X\to X$ denote the projection to the first factor and
$pr_2:X\times X\to X$ to the second. Given a strict deformation $D$ we define a section $s:U\times I\to PX$ as follows:
$$
s(x,y)(t)=\begin{cases}pr_1D((x,y),2t) & \ \text{if}\ \  t\le 1/2\\
                         pr_2D((x,y),2-2t) & \ \text{if}\ \ t\ge 1/2.\\
\end{cases}
$$
This path is well-defined since $D((x,y),1)\in\Delta X$. Clearly it is a path from $x$ to $y$. If $x=y$, the path is stationary.
Thus $s$ is a reserved section.
\end{proof}

\begin{prop}\label{retraction}
Let $A$ be a retract of an ENR space $X$. Then $TC(X)\ge TC(A)$.
\end{prop}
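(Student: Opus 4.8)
The plan is to pull a motion planning cover of $X\times X$ back to $A\times A$ by post-composing its local sections with the retraction. Write $\pi_X\colon PX\to X\times X$ and $\pi_A\colon PA\to A\times A$ for the two path fibrations, and let $r\colon X\to A$ be a retraction, so that $r\circ\iota=\id_A$ where $\iota\colon A\hookrightarrow X$ is the inclusion. First I would note that a retract of an ENR is again an ENR (compose $r$ with a retraction of an open subset of some $\R^n$ onto $X$), so that $TC(A)$ is defined in the first place.

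Next, suppose $TC(X)=n$ and choose, using the open-cover description of $TC$ valid for ENRs, an open cover $U_1,\dots,U_n$ of $X\times X$ together with continuous sections $s_j\colon U_j\to PX$ of $\pi_X$. Set $V_j=U_j\cap(A\times A)$; these are open in $A\times A$ and cover it. Post-composition with $r$ induces a continuous map $r_\ast\colon PX\to PA$, $r_\ast(f)=r\circ f$, and I would define $\tilde s_j\colon V_j\to PA$ by $\tilde s_j=r_\ast\circ(s_j|_{V_j})$. For $(a,b)\in V_j$ the path $s_j(a,b)$ runs from $a$ to $b$ in $X$, and since $a,b\in A$ we get $\tilde s_j(a,b)(0)=r(a)=a$ and $\tilde s_j(a,b)(1)=r(b)=b$; hence $\tilde s_j$ is a section of $\pi_A$ over $V_j$. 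Thus $\{V_j,\tilde s_j\}_{j=1}^n$ is a motion planning cover of $A\times A$, so $TC(A)\le n$, which is the claim.

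There is no genuine obstacle here: the argument is a one-line naturality observation, and the only points worth a word of care are that $A$ inherits the ENR property and that the two equivalent descriptions of $TC$ (an open cover with sections, equivalently the Schwarz genus of the path fibration) are being applied to $A$ legitimately. Equivalently, one can phrase the whole proof as the monotonicity $sg(\pi_A)\le sg(\pi_X)$ of the Schwarz genus under the map of fibrations $(r_\ast,\,r\times r)$ from $\pi_X$ to $\pi_A$, together with the fact that $(r\times r)|_{A\times A}=\id_{A\times A}$, so that pullbacks of sections over $U_j$ land on genuine sections over $U_j\cap(A\times A)$.
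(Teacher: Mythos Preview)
Your proof is correct and follows exactly the same approach as the paper: intersect an open motion-planning cover of $X\times X$ with $A\times A$ and post-compose the local sections with the retraction $r$ to obtain sections of $\pi_A$. You include a few extra remarks (that $A$ is again an ENR, the explicit endpoint check, and the Schwarz-genus reformulation), but the core argument is identical.
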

\begin{proof} Let $r:X\to A$ be a retraction. Let $TC(X)=k$ and
let $X\times X=U_1\cup\dots\cup U_k$ be an open cover together with continuous sections $s_i:U_i\to PX$.
We define sections $\sigma_i:U_i\cap (A\times A)\to PA$ by the formula $\sigma_i(a_1,a_2)(t)=r(s(a_1,a_2)(t))$.
\end{proof}

We recall that a family $\scr U$ of subsets of $X$ is called a {\em $k$-cover},
$k\in \N$ if every subfamily that consists of $k$ elements forms a cover of $X$.
We use the  following theorem ~\cite{Dr1}.
\begin{theorem}\label{criterion}
Let $\{U_0',\dots,U_n'\}$ be an open cover of a normal topological
space $X$. Then  for any $m=n,n+1,\dots,\infty$ there is an  open
$(n+1)$-cover of $X$, $\{U_k\}_{k=0}^{m}$ such that $U_k=U_k'$ for
$k\le n$ and $U_k=\cup_{i=0}^nV_i$ is a disjoint union with
$V_i\subset U_i$ for $k>n$.
\end{theorem}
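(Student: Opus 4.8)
The plan is to reduce the statement to a geometric construction in the standard $n$-simplex $\Delta^n$ via the nerve of the cover. First I would choose a partition of unity $\{\phi_i\}_{i=0}^n$ subordinate to $\{U_0',\dots,U_n'\}$ (this exists because $X$ is normal and the cover is finite) and form the map $f=(\phi_0,\dots,\phi_n)\colon X\to\Delta^n$. Writing $\operatorname{St}(e_i)=\{t\in\Delta^n: t_i>0\}$ for the open star of the $i$-th vertex, we get $f^{-1}(\operatorname{St}(e_i))=\{\phi_i>0\}\subset\operatorname{supp}\phi_i\subset U_i'$. Because enlarging the members of an $(n+1)$-cover preserves the $(n+1)$-cover property, it suffices to produce an open $(n+1)$-cover $\{O_k\}_{k=0}^m$ of $\Delta^n$ with $O_i=\operatorname{St}(e_i)$ for $i\le n$ and, for $k>n$, $O_k=\bigsqcup_{i=0}^n S_i^{(k)}$ a disjoint union of open sets with $S_i^{(k)}\subset\operatorname{St}(e_i)$; then $U_k:=f^{-1}(O_k)$ for $k>n$, together with the original $U_i'$ for $i\le n$, does the job (the $U_k$ are disjoint unions $\bigsqcup_i f^{-1}(S_i^{(k)})$ with $f^{-1}(S_i^{(k)})\subset U_i'$, and passing from $f^{-1}(\operatorname{St}(e_i))$ to the larger $U_i'$ keeps the family an $(n+1)$-cover).

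Next I would recast the $(n+1)$-cover condition pointwise. For $k>n$ set $Z_k=\Delta^n\setminus O_k$. A point of $\Delta^n$ with exactly $p$ nonzero coordinates already misses $n+1-p$ of the stars $\operatorname{St}(e_i)$, so $\{O_k\}$ is an $(n+1)$-cover exactly when each such point lies in at most $p-1$ of the sets $Z_k$. For the extra sets I would take weighted Voronoi cells: pick positive weights $\beta^{(k)}=(\beta_0^{(k)},\dots,\beta_n^{(k)})$ and put
\[
S_i^{(k)}=\{\,t\in\Delta^n:\beta_i^{(k)}t_i>\beta_j^{(k)}t_j\ \text{for all}\ j\ne i\,\},\qquad O_k=\bigsqcup_{i=0}^n S_i^{(k)}.
\]
These are open, pairwise disjoint, and contained in the respective stars (if $t_i=0$ then $\beta_i^{(k)}t_i=0$ is not the maximum), and $Z_k$ is the ``tie locus,'' which lies in the union over pairs $i\ne j$ of the slices $H_{ij}^{(k)}=\{t:\beta_i^{(k)}t_i=\beta_j^{(k)}t_j\}$. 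In particular every vertex $e_i$ has a unique maximizing coordinate, so it lies in no $Z_k$ — which is exactly what the case $p=1$ of the multiplicity condition demands.

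The heart of the proof, and the step I expect to be the main obstacle, is arranging the weights so that the multiplicity bound holds. If $t$ lies in the relative interior of a $(p-1)$-face $\sigma$ of $\Delta^n$, a tie at $t$ can occur only between two coordinates indexing vertices of $\sigma$, so $t\in Z_{k_1}\cap\dots\cap Z_{k_p}$ with $k_1<\dots<k_p$ would force $p$ of the slices $H_{i_sj_s}^{(k_s)}$ to have a common point in the relative interior of the $(p-1)$-dimensional face $\sigma$. I would prevent this by choosing the $\beta^{(k)}$ one at a time in general position: at each stage only the finitely many faces, the finitely many tuples of earlier indices, and the finitely many coordinate pairs are relevant, and for each such datum the previously chosen slices already meet $\sigma$ in a set of dimension $\le 0$, so the weights for which a new slice passes through one of those finitely many points form a set of codimension one; a countable union (finite if $m<\infty$) of codimension-one conditions is avoidable. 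Since $p$ hyperplanes in general position in a $(p-1)$-dimensional face have empty intersection, the multiplicity bound then holds on every face, hence $\{O_k\}$ is an $(n+1)$-cover, and the reduction above finishes the proof. The case $m=\infty$ needs nothing new, as the inductive choice of weights simply never terminates. What remains is bookkeeping: checking that the nerve map is available for a finite open cover of a merely normal space, and that the general-position conditions really are of codimension one despite the slices $H_{ij}^{(k)}$ all containing a common coordinate subspace (distinct slices with the same pair $(i,j)$ meet only on the boundary of $\sigma$, which is harmless).
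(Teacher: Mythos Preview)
The paper does not prove this theorem; it is quoted from \cite{Dr1} and used as a black box, so there is no in-paper argument to compare yours against.

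That said, your approach is sound and would yield a correct proof. The reduction to $\Delta^n$ via a partition of unity subordinate to a finite open cover of a normal space (available by iterated Urysohn) is standard and correct, and the pullback step, together with enlarging $f^{-1}(\St(e_i))$ to $U_i'$, does preserve the $(n+1)$-cover property and the required disjoint-union structure. The weighted-Voronoi construction for the extra sets is a clean device, and your reformulation of the $(n+1)$-cover condition as the multiplicity bound ``each interior point of a $(p-1)$-face lies in at most $p-1$ of the tie loci $Z_k$'' is exactly right.

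The only place your write-up is loose is the general-position step. Your inductive phrasing quietly uses a stronger invariant than it states: to know that $p-1$ earlier slices meet a $(p-1)$-face in a finite set you must have maintained that any $r$ of them meet each $d$-face in dimension $\le d-r$, for all $r$; this is routine, but should be said explicitly. A slicker alternative that avoids the induction entirely is to choose the weights so that the family $\{\log\beta_i^{(k)}: 0\le i\le n,\ k>n\}$ is linearly independent over $\Q$. If some $t$ in the interior of a $(p-1)$-face with vertex set $I$ lay in $Z_{k_1}\cap\dots\cap Z_{k_p}$, the $p$ tie-pairs $(i_s,j_s)\subset I$ would form a multigraph with $p$ edges on $p$ vertices, hence containing a simple cycle; telescoping the relations $\log t_{i_s}-\log t_{j_s}=\log\beta_{j_s}^{(k_s)}-\log\beta_{i_s}^{(k_s)}$ around that cycle produces a nontrivial integer relation among the $\log\beta_i^{(k)}$, a contradiction. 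This handles $m=\infty$ at once and absorbs your parenthetical about repeated pairs (a $2$-cycle in the multigraph) as a special case.
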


\begin{cor}\label{deformable} 
Suppose that all sets $U_i'$, $i=0,\dots,n$, in the  theorem are (strictly) deformable in $X$ to a subspace $A\subset X$. Then the
sets $U_k$ for all $k$ are (strictly) deformable in $X$ to $A$. 
\end{cor}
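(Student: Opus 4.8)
The plan is to split the family $\{U_k\}$ according to whether $k\le n$ or $k>n$. For $k\le n$ there is nothing to prove, since Theorem~\ref{criterion} gives $U_k=U_k'$ and $U_k'$ is (strictly) deformable in $X$ to $A$ by hypothesis. Thus the entire content lies in the case $k>n$, where Theorem~\ref{criterion} presents $U_k$ as a disjoint union $U_k=V_0\sqcup\dots\sqcup V_n$ of open sets with $V_i\subset U_i'$.

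Fix such a $k$ and let $D_i:U_i'\times I\to X$, $i=0,\dots,n$, be (strict) deformations of $U_i'$ in $X$ to $A$ furnished by the hypothesis. I would define $D_k:U_k\times I\to X$ piecewise by $D_k|_{V_i\times I}=D_i|_{V_i\times I}$. This is unambiguous because the $V_i$ are pairwise disjoint and cover $U_k$, and it is continuous because $\{V_i\times I\}_{i=0}^n$ is an \emph{open} cover of $U_k\times I$ and $D_k$ restricts to the continuous map $D_i$ on each member. The remaining properties are inherited pointwise: for $x\in V_i$ one has $D_k(x,0)=D_i(x,0)=x$ and $D_k(x,1)=D_i(x,1)\in A$, so $D_k$ is a deformation of $U_k$ to $A$; and if every $D_i$ is strict then for $x\in A\cap U_k$, with $i$ the unique index for which $x\in V_i\subset U_i'$, we get $D_k(x,t)=D_i(x,t)=x$ for all $t\in I$, so $D_k$ is strict. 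Since each $U_k$ is handled on its own, the argument works equally well for $m$ finite or $m=\infty$.

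The only point to watch is the continuity of the piecewise map $D_k$, which is exactly why one uses that the sets $V_i$ of Theorem~\ref{criterion} are open (equivalently, open and closed in $U_k$) rather than merely disjoint; granting that, the corollary is a routine pasting argument and requires no estimate or further construction.
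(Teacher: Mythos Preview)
Your proof is correct and is precisely the routine pasting argument the paper has in mind; the corollary is stated in the paper without proof, as it follows immediately from the structure of $U_k$ in Theorem~\ref{criterion}. Your remark that continuity requires the $V_i$ to be open (hence clopen in $U_k$) is the only nontrivial point, and it is indeed how the $V_i$ arise in the construction underlying Theorem~\ref{criterion}.
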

The following proposition is well-known. The trick presented there can be traced back to the work of Kolmogorov on 13th Hilbert's problem~\cite{Os}.
\begin{prop}\label{KO} 
Let $U_0,\dots, U_{n+m}$ be  an $(n+1)$-cover of $X$ and let $V_0,\dots, V_{m+n}$ be  an $(m+1)$-cover of $Y$. 
Then the sets $W_k=U_k\times V_k$, $k=0,\dots n+m$, cover  $X\times Y$.
\end{prop}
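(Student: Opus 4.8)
The plan is to recast the $k$-cover condition as a pointwise statement and then finish by a counting (pigeonhole) argument. First I would record the elementary reformulation: a finite family $\{U_0,\dots,U_{n+m}\}$ is an $(n+1)$-cover of $X$ if and only if every point $x\in X$ lies outside at most $n$ of the sets $U_k$. Indeed, if some $x$ were outside $n+1$ of the sets, those $n+1$ sets would form a subfamily of cardinality $n+1$ failing to cover $x$, contradicting the definition; conversely, if every point misses at most $n$ of the sets, then any subfamily of size $n+1$ must cover every point, since a point lying outside all of them would miss at least $n+1$ sets. Dually, $\{V_0,\dots,V_{n+m}\}$ being an $(m+1)$-cover of $Y$ says that every $y\in Y$ lies outside at most $m$ of the $V_k$.

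Next I would fix an arbitrary point $(x,y)\in X\times Y$ and put $A=\{k\in\{0,\dots,n+m\}\mid x\notin U_k\}$ and $B=\{k\in\{0,\dots,n+m\}\mid y\notin V_k\}$. By the reformulation above, $|A|\le n$ and $|B|\le m$, so $|A\cup B|\le n+m$. Since the index set $\{0,1,\dots,n+m\}$ has $n+m+1$ elements, there is an index $k\notin A\cup B$. For this $k$ we have $x\in U_k$ and $y\in V_k$, hence $(x,y)\in U_k\times V_k=W_k$. As $(x,y)$ was arbitrary, the sets $W_0,\dots,W_{n+m}$ cover $X\times Y$, which is the claim.

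I do not anticipate a real obstacle here: the argument is purely combinatorial. The only point deserving a little care is the equivalence between the $k$-cover property and the "misses at most $k-1$ members" description, which is the direct counting argument given above; once that is in hand, the conclusion is the trivial observation that a set of size $\le n$ and a set of size $\le m$ cannot together exhaust a set of $n+m+1$ indices.
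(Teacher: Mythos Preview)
Your proof is correct and is essentially the same counting argument as the paper's: the paper observes that each $x$ lies in at least $m+1$ of the $U_k$ (equivalently, misses at most $n$), and then uses the $(m+1)$-cover property of the $V_k$ on those indices to catch $y$. Your phrasing via the ``misses at most $k-1$ sets'' reformulation and the explicit pigeonhole on $|A\cup B|\le n+m<n+m+1$ is just a slightly cleaner packaging of the identical idea.
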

\begin{proof} Let $(x,y)\in X\times Y$. A point $x$ is covered at least by $m+1$ elements. Otherwise $n+1$ elements that do not cover $x$
would not form a cover of $X$. That would give a contradiction with the assumption that  $U_0,\dots, U_{n+m}$ is an $(n+1)$-cover of $X$.
Let $x\in U_{i_0}\cap\dots\cap U_{i_m}$. By the assumption, the family $V_{i_0},\dots,V_{i_m}$ covers $Y$. Hence $y\in V_{i_s}$ for some $s$.
Then $(x,y)\in W_{i_s}$.
\end{proof}

\begin{thm}\label{wedge} For all ENR spaces $X$ and $Y$,
$$ \max\{TC(X), TC(Y), cat(X\times Y)\}\le TC(X\vee Y)\le$$
$$\le TC^M(X\vee Y)\le TC^M(X)+TC^M(Y)-1$$
\end{thm}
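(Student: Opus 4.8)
The plan is to verify the four inequalities of the chain in turn; only the last one requires genuine work.

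\emph{The lower bounds coming from retractions.} Both $X$ and $Y$ are retracts of the ENR $X\vee Y$ (collapse the complementary summand to the wedge point), so $TC(X)\le TC(X\vee Y)$ and $TC(Y)\le TC(X\vee Y)$ by Proposition~\ref{retraction}. The inequality $TC(X\vee Y)\le TC^M(X\vee Y)$ is immediate, since a reserved motion planning algorithm is in particular a motion planning algorithm.

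\emph{The bound $\cat(X\times Y)\le TC(X\vee Y)$.} Write $Z=X\vee Y$ with wedge point $*$, and regard $X\times Y$ as the subspace $\{(x,y):x\in X,\ y\in Y\}$ of $Z\times Z$; let $r\colon Z\to X$ and $r'\colon Z\to Y$ be the collapsing retractions. Given an open cover $U_1,\dots,U_k$ of $Z\times Z$ with sections $s_i\colon U_i\to PZ$ of $\pi$, put $V_i=U_i\cap(X\times Y)$. For $(x,y)\in V_i$ the assignment $(t,(x,y))\mapsto r\big(s_i(x,y)(t)\big)$ is a homotopy $V_i\times I\to X$ from $\mathrm{pr}_1|_{V_i}$ to the constant map at $*$, and $r'$ similarly yields $\mathrm{pr}_2|_{V_i}\simeq *$ in $Y$. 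A map into a product being null-homotopic as soon as its two coordinate maps are, the inclusion $V_i\hookrightarrow X\times Y$ is null-homotopic, i.e.\ $V_i$ is contractible in $X\times Y$. Hence $\{V_i\}$ is a categorical cover of $X\times Y$ of size $\le k$, and $\cat(X\times Y)\le TC(Z)$ follows.

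\emph{The upper bound $TC^M(X\vee Y)\le TC^M(X)+TC^M(Y)-1$.} Set $p=TC^M(X)$, $q=TC^M(Y)$ and $Z=X\vee Y$. By Proposition~\ref{sections} choose reserved open covers $\{A_i\}_{i=1}^{p}$ of $X\times X$ and $\{B_j\}_{j=1}^{q}$ of $Y\times Y$, each member strictly deformable to the corresponding diagonal; by Theorem~\ref{criterion} together with the strict version of Corollary~\ref{deformable} replace them by an open $p$-cover $\{\bar A_k\}_{k=1}^{p+q-1}$ of $X\times X$ and an open $q$-cover $\{\bar B_k\}_{k=1}^{p+q-1}$ of $Y\times Y$ whose members are still strictly deformable to the diagonals, and by Proposition~\ref{sections}(3) let the corresponding strict deformations $D^A_k$, $D^B_k$ preserve the faces through $*$. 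Put
$$S_k=\mathrm{pr}_1\big(\bar A_k\cap(X\times\{*\})\big),\qquad S'_k=\mathrm{pr}_2\big(\bar A_k\cap(\{*\}\times X)\big),$$
$$T_k=\mathrm{pr}_2\big(\bar B_k\cap(\{*\}\times Y)\big),\qquad T'_k=\mathrm{pr}_1\big(\bar B_k\cap(Y\times\{*\})\big).$$
These are open subsets, null-homotopic in $X$, resp.\ $Y$, by contracting along the appropriate coordinate projections of $D^A_k$, $D^B_k$; moreover $\{S_k\},\{S'_k\}$ are $p$-covers of $X$ and $\{T_k\},\{T'_k\}$ are $q$-covers of $Y$, all of cardinality $p+q-1$. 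Proposition~\ref{KO} then shows that $\{S_k\times T_k\}_k$ covers $X\times Y$ and $\{T'_k\times S'_k\}_k$ covers $Y\times X$, so the sets
$$C_k=\bar A_k\cup\bar B_k\cup(S_k\times T_k)\cup(T'_k\times S'_k),\qquad k=1,\dots,p+q-1,$$
cover $Z\times Z=(X\times X)\cup(Y\times Y)\cup(X\times Y)\cup(Y\times X)$. It remains to equip each $C_k$ with a reserved section of $\pi\colon PZ\to Z\times Z$. This is where the face-preservation of $D^A_k$ is used: it forces the reserved section $\bar\alpha_k$ produced from $D^A_k$ by Proposition~\ref{sections} to satisfy that $\bar\alpha_k(x,*)$ is a path from $x$ to $*$ in $X$ traversed on $[0,\tfrac12]$ and constant on $[\tfrac12,1]$, and symmetrically for $\bar\alpha_k(*,x)$; likewise for $\bar\beta_k$. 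Defining the two mixed sections from these same data — e.g.\ $\gamma_k(x,y)$ following $\bar\alpha_k(x,*)$ on $[0,\tfrac12]$ and then the reversed track $s\mapsto\mathrm{pr}_2D^B_k\big((*,y),s\big)$ on $[\tfrac12,1]$ — one checks that $\bar\alpha_k$, $\bar\beta_k$ and the two mixed sections agree on every pairwise overlap of the four pieces of $C_k$ (all of which lie in the cross $(Z\times\{*\})\cup(\{*\}\times Z)$), so they glue to a reserved section over $C_k$, yielding $TC^M(Z)\le p+q-1$.

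\emph{Where the difficulty is.} The crux is precisely this last gluing. The pieces of $C_k$ are not open in $Z\times Z$ — for instance $\bar A_k$ is open only in the closed subspace $X\times X$ — so besides making the four sections literally agree on overlaps (which is what the face-preserving deformations buy us) one must enlarge each $C_k$ to a genuine open subset of $Z\times Z$ carrying a reserved section. The obstruction to a careless enlargement is the ``bad'' diagonal points near $(*,*)$: thickening $\bar A_k$ into $X\times Y$ picks up points $(z,z)$ with $z\in Y$, at which the extended deformation becomes a nontrivial loop rather than a constant path, so the thickening has to be organized so that such points are covered inside $\bar B_k$ instead, and symmetrically. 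Carrying this out — or, alternatively, invoking the description of $TC^M$ by covers of $Z\times Z$ by ENRs sketched in the introduction and checking that the glued sections extend over neighborhoods — is the technical heart of the argument; the rest is the combinatorics of Theorem~\ref{criterion} and Proposition~\ref{KO} already recorded above.
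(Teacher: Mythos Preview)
Your proposal is correct and follows essentially the same strategy as the paper. For the $\cat(X\times Y)$ lower bound both arguments use the two collapsing retractions $r_X,r_Y$ together with the sections $s_i$ to contract the restricted cover; you phrase it as separate null-homotopies of the two coordinate projections, while the paper writes down one explicit contraction $G(x,y,t)=(r_XH_U(x,y,t),\,r_YH_U(x,y,1-t))$. For the upper bound both proofs enlarge the covers via Theorem~\ref{criterion} and Corollary~\ref{deformable}, invoke Proposition~\ref{sections}(3) to make the strict deformations face-preserving, and apply Proposition~\ref{KO} to cover the mixed pieces $X\times Y$ and $Y\times X$.

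The one organizational difference worth noting is in the gluing. The paper glues the strict \emph{deformations}: on the mixed piece $W_k=U_k\times V_k$ it takes the product $T_k$ of the face-restricted contractions $H_k,G_k$ to $(v_0,v_0)$, observes that $D^k_X,D^k_Y,T_k,T_k'$ literally agree on the cross $C=(Z\times v_0)\cup(v_0\times Z)$, and only afterwards converts the single glued deformation $Q_k$ into a reserved section via Proposition~\ref{sections}. You convert to sections first and then glue those, which obliges you to track the half-interval parametrizations of $\bar\alpha_k,\bar\beta_k,\gamma_k$. Your verification is correct, but the paper's order of operations makes the overlap check a one-liner.

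Regarding the openness of the $C_k$ that you single out as ``the crux'': the paper simply declares its sets $O_k$ to be ``open'' and moves on, so you are being more scrupulous about a step the paper leaves implicit rather than identifying a gap in your argument relative to the paper's. (One cheap fix, available in both versions: since all deformations are strict, one may first enlarge every $\bar A_k,\bar B_k$ to contain the diagonal---in particular $(\ast,\ast)$---and then the four pieces of each $O_k$ fit together to an honest open set of $Z\times Z$.)
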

\begin{proof} 
Note that $TC(X\vee Y)\ge TC(X),TC(Y)$ by Proposition~\ref{retraction}.
Let $r_X:X\vee Y\to X$ and $r_Y:X\vee Y\to Y$ be the retraction collapsing the wedge  onto $X$ and $Y$
respectively.
The subset $$X\times Y\subset (X\vee Y)\times(X\vee Y)$$  is covered by $\le TC(X\vee Y)$ open sets  $U$ supplied with
a homotopy $$H_U:U\times I\to X\vee Y$$ such that $H(x,y,0)=x$ and $H(x,y,1)=y$. 
For each $U$ we define a homotopy $G:U\times I\to X\times Y$ by the formula
$$
G(x,y,t)=(r_XH_U(x,y,t),r_YH_U(x,y,1-t)).$$
Then $$G(x,y,0)=(r_XH_U(x,y,0),r_YH_U(x,y,1))=(r_X(x),r_Y(y))=(x,y)$$ and
$$G(x,y,1)=(r_XH_U(x,y,1),r_YH_U(x,y,0))=(r_X(y),r_Y(x))=(v_0,v_0)$$ where
$v_0$ is the wedge point in $X\vee Y$. Thus, $G$ contracts $U$ to a point in $X\times Y$.

Let $TC^M(X)=n+1$ and $TC^M(Y)=m+1$. Then there is an open cover $\tilde U_0,\dots,\tilde U_n$  of $X\times X$
with reserved sections $s_i:\tilde U_i\to PX$, $i=0,\dots,n$. Similarly, let $\tilde V_0,\dots,\tilde V_m$ be an open covering of $Y\times Y$ with 
reserved sections
$\sigma_j:\tilde V_j\to PY$, $j=0,\dots,m$. By Proposition~\ref{sections} all these sets are strictly deformable to the diagonal in $X\times X$ and
$Y\times Y$ respectively.
By Corollary~\ref{deformable} there is an open $(n+1)$-cover  $\tilde U_0,\dots,\tilde U_n,\dots,\tilde U_{n+m}$  of $X\times X$
by sets strictly deformable to the diagonal.
By Proposition~\ref{sections} there are strict deformations $$D_X^k:\tilde U_k\times I\to X\times X$$ of $\tilde U_k$ to $\Delta X$ that preserves faces $X\times v_0$ and $v_0\times X$. Similarly, 
there is an open $(m+1)$-cover $\tilde V_0,\dots,\tilde V_m,\dots,\tilde V_{m+n}$ of $Y\times Y$ and there are strict 
deformations $D_Y^k$ of $\tilde V_k$ in $Y\times Y$ to the diagonal $\Delta Y$ that preserves faces.

We use notations $$U_k=\tilde U_k\cap(X\times v_0)\ \ \text{and}\ \ V_k=\tilde V_k\cap(v_0\times Y),\ \ k=0,\dots,m+n.$$ 
Note that $U_0,\dots, U_{m+n}$ is an $(n+1)$-cover of $X\times v_0=X$ and $V_0,\dots, V_{m+n}$ is an
$(m+1)$-cover of $v_0\times Y=Y$. Let $W_k=U_k\times V_k$. By Proposition~\ref{KO} $W_0,\dots,W_{m+n}$ is an open cover
of $X\times Y$.

The deformations $D_X^k$ define the deformations $H_k:U_k\times I\to X\times v_0$ to the point $v_0\in X$
and the deformations $D_Y^k$ define the deformations $G_k:V_k\times I\to v_0\times Y$ to the point $v_0\in Y$. 
These deformations define the deformations
$$T_k:W_k\times I\to X\times Y$$ to the point $(v_0,v_0)$ such that if $W_k\cap (X\times v_0)\ne\emptyset$ then
$W_k\cap (X\times v_0)=U_k$ and  $T_k|_{U_k\times I}=H_k$ and if $W_k\cap(v_0\times Y)\ne\emptyset$ then
$W_k\cap(v_0\times Y)=V_k$ and $T_k|_{V_k\times I}=G_k$ for $k=0,\dots, m+n$.

Symmetrically, define $$U_k'=\tilde U_k\cap(v_0\times X)\ \ \text{and}\ \ \
V_k'=\tilde V_k\cap(Y\times v_0),\ \ \ k=0,\dots,m+n,$$ and corresponding deformations $$H_k':U_k'\times I\to X\ \ \text{and}\ \ G_k':V_k'\times I\to Y$$ to
the base points. Define $W'_k=U'_k\times V'_k$. By Proposition~\ref{KO}, the family $W_0',\dots, W_{n+m}'$ is an open cover of $Y\times X$. As before
there are deformations
$$T_k':W_k'\times I\to Y\times X$$ to the point $(v_0,v_0)$ such that if $W_k'\cap (v_0\times X)\ne\emptyset$, then
$W_k'\cap (v_0\times X)=U_k'$ and $T'_k|_{U'_k\times I}=H'_k$ and if $W_k'\cap (Y\times v_0)\ne\emptyset$, then
$W_k'\cap(Y\times v_0)=V_k$, $T'_k|_{V'_k\times I}=G'_k$ for $k=0,\dots, m+n$.

We  
define open sets
$$
O_k=W_k\cup W_k'\cup\tilde U_k\cup\tilde V_k\subset(X\vee Y)\times(X\vee Y),\ \ \  k=0,\dots,n+m$$
and note that  $\mathcal O=\{O_k\}$ covers $(X\vee Y)\times(X\vee Y)$. Note that the set
$$C=(X\vee Y)\times v_0\bigcup v_0\times(X\vee Y)$$ defines a partition of $(X\vee Y)\times(X\vee Y)$ in four pieces
$X\times X$, $X\times Y$, $Y\times X$, and $Y\times Y$. Also note that the intersection
$O_k\cap C\subset U_k\cup V_k\cup U_k'\cup V_k'$. By the construction  the deformations $D^k_X$, $D^k_Y$, $T_k$, and $T_k$ all
agrees on $O_k\cap C$. Therefore
the union of deformations
$$
T_k\cup T_k'\cup D_X^k\cup D_Y^k:O_k\times I\to (X\vee Y)\times(X\vee Y)
$$
is a well defined deformation $Q_k$ of $O_k$ to the diagonal $\Delta(X\vee Y)$.
Note that for all $k$, $Q_k$ are  strict deformations.
By Proposition~\ref{sections} each $Q_k$ defines a reserved section $\alpha_k:O_k\to P(X\vee Y)$.
Therefore, $$TX^M(X\vee Y)\le n+m+1=TC(X)+T(Y)-1.$$
\end{proof}

\begin{remark} A stronger version of the upper bound of Theorem~\ref{wedge} was proposed in
~\cite{F2}, (Theorem 19.1):
$$
 TC(X\vee Y)\le \max\{TC(X), TC(Y), \cat(X)+ \cat(Y)-1\}.
$$
Since the proof in ~\cite{F2} contains a gap, we call this inequality {\em Farber's Conjecture}. Note that Farber's inequality in view
of Theorem~\ref{wedge} would turns into the equality for spaces $X$ and $Y$
with $\Cat(X\times Y)=\Cat(X)+\Cat(Y)$.
\end{remark}

\begin{thm}\label{example}
(1)  There is a $2$-to-$1$ covering map $p:E\to B$ with $TC(E)>TC(B)$.

(2)  There is a finite complex $X$ with $TC(X)<TC(\tilde X)$  where $\tilde X$ is the universal covering of $X$.
\end{thm}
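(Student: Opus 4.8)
The plan is to build an explicit example using real projective spaces, exploiting the fact that $TC$ behaves quite differently from $\cat$ under covering maps. Recall that $TC(\mathbb{R}P^n)$ is tied to the immersion dimension of $\mathbb{R}P^n$ by the Farber--Tabachnikov--Yuzvinsky theorem, so $TC(\mathbb{R}P^n)$ is typically large (of order $2n$), while $TC(S^n)$ is only $2$ or $3$. Thus the double cover $p: S^n \to \mathbb{R}P^n$ has $TC(S^n) \ll TC(\mathbb{R}P^n)$, i.e. the covering space has \emph{smaller} $TC$ than the base. This is the wrong direction: for part (1) we need a covering $p: E \to B$ with $TC(E) > TC(B)$, and for part (2) a space $X$ with $TC(X) < TC(\tilde X)$. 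So the idea is to take wedges and use Theorem~\ref{wedge} to force the base's $TC$ down while keeping a projective space sitting inside the cover.

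For part (2), I would set $X = \mathbb{R}P^n \vee S^1$ for suitable large $n$. The universal cover $\tilde X$ is an infinite tree-like complex obtained by gluing copies of $S^n$ (the universal cover of $\mathbb{R}P^n$, which contributes the $\mathbb{Z}/2$ to $\pi_1$) along a copy of $\mathbb{R}$ (the universal cover of $S^1$); more precisely $\pi_1(X) = \mathbb{Z}/2 \ast \mathbb{Z}$, and $\tilde X$ contains infinitely many copies of $S^n$ but \emph{no} copy of $\mathbb{R}P^n$. Here the key point is that $\tilde X$ retracts onto, or at least contains as a retract, something whose $TC$ we can bound, while $X$ itself has small $TC$: by Theorem~\ref{wedge}, $TC(X) \le TC^M(\mathbb{R}P^n) + TC^M(S^1) - 1$, and since $TC^M(S^1) \le TC(S^1)+1 = 3$ and $TC^M(\mathbb{R}P^n) \le TC(\mathbb{R}P^n)+1$ by Theorem~2.1, this only gives an \emph{upper} bound involving $TC(\mathbb{R}P^n)$, which is too large. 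The correct maneuver is the reverse: choose the pieces so that the base is a wedge with small $TC$ but the universal cover contains a large projective space as a retract.

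Concretely, I would instead let $X$ be a space with $\pi_1(X)$ acting on $\tilde X$ so that $\tilde X$ retracts onto $\mathbb{R}P^n$ while $X$ is, say, a $1$-complex or a low-$TC$ space. The model is: take the mapping torus or an appropriate $K(\pi,1)$-type construction where a free action on a high-dimensional $\mathbb{R}P^n$-like cover collapses to something simple downstairs — but since such actions don't exist for $\mathbb{R}P^n$ directly, the honest construction is to take $X = L \vee S^1$ where $L$ is a lens space whose universal cover $S^{2k-1}$ has small $TC$, together with the observation from Theorem~\ref{wedge} that $cat(L \times S^1) \le TC(L\vee S^1)$, combined with Proposition~\ref{retraction}: since $\mathbb{R}P^n$ can be built to sit as a retract inside the universal cover of a carefully chosen $X$ whose own $TC$ is controlled by $\cat$ via Lemma~\ref{weinberger}-type phenomena. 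The main obstacle, and the step requiring the most care, is producing a finite complex $X$ in which $TC(X)$ is genuinely smaller than $TC(\tilde X)$: one must simultaneously (a) keep $TC(X)$ small, for which the wedge upper bound in Theorem~\ref{wedge} is the natural tool, and (b) detect a large lower bound for $TC(\tilde X)$, for which one uses Proposition~\ref{retraction} to reduce to a projective space retract of $\tilde X$ whose $TC$ is known to be large via the zero-divisor cup-length in $H^*(\mathbb{R}P^n \times \mathbb{R}P^n; \mathbb{Z}/2)$. Part (1) then follows immediately by taking $B = X$, $E = \tilde X$ restricted to a double cover sitting between $X$ and $\tilde X$ (the cover corresponding to the index-$2$ subgroup killing the $\mathbb{Z}/2$ factor), so that $E$ still retracts onto $\mathbb{R}P^n$ and $TC(E) > TC(B)$.
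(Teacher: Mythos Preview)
Your proposal has a genuine gap that cannot be repaired along the lines you sketch. For part~(2) you want the universal cover $\tilde X$ to retract onto $\mathbb{R}P^n$ and then invoke Proposition~\ref{retraction} together with the large zero-divisor cup-length of $\mathbb{R}P^n$. But $\tilde X$ is simply connected, while $\pi_1(\mathbb{R}P^n)=\Z_2$ for $n\ge 2$; a retract of a simply connected space is simply connected, so no such retraction exists. The same obstruction kills the lens-space variant you mention. For part~(1) you propose the index-$2$ cover of $\mathbb{R}P^n\vee S^1$ that ``kills the $\Z_2$ factor''; this cover is $\mathbb{R}P^n\vee\mathbb{R}P^n\vee S^1$, which does retract onto $\mathbb{R}P^n$, but so does the base $B=\mathbb{R}P^n\vee S^1$, so Proposition~\ref{retraction} gives the same lower bound $TC(\mathbb{R}P^n)$ for both $TC(E)$ and $TC(B)$ and no strict inequality follows. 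More seriously, the upper bound from Theorem~\ref{wedge} for $TC(B)$ involves $TC^M(\mathbb{R}P^n)$, which you have no way to compute or bound sharply; projective spaces are precisely the wrong building blocks here because their $TC$ is large and not governed by $\cat$.

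The paper's argument avoids all of this by working with \emph{Lie groups}, where Lemma~\ref{weinberger} gives the exact equality $TC^M(G)=\cat(G)$ and hence sharp control in the upper bound of Theorem~\ref{wedge}. For part~(1) one takes $B=T\vee S^1$ with $T=S^1\times S^1$; then $TC(B)\le TC^M(T)+TC^M(S^1)-1=\cat(T)+\cat(S^1)-1=4$, while the double cover along the $S^1$ factor gives $E\simeq T\vee T\vee S^1$, so by the lower bound in Theorem~\ref{wedge} one has $TC(E)\ge\cat(T\times(T\vee S^1))=5$. For part~(2) one takes $X=(S^3\times S^3)\vee S^1$; again Lemma~\ref{weinberger} yields $TC(X)\le 4$, while $\tilde X\simeq\bigvee^\infty(S^3\times S^3)$ retracts onto $(S^3\times S^3)\vee(S^3\times S^3)$, and the lower bound in Theorem~\ref{wedge} plus cup-length give $TC(\tilde X)\ge\cat((S^3)^4)\ge 5$. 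The key idea you are missing is that the lower bound for the cover comes not from a projective-space retract but from the $\cat(X\times Y)$ term in Theorem~\ref{wedge}, applied to a wedge of \emph{two} copies of the same simply connected piece appearing in the cover.
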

\begin{proof}
(1) We take $B=T\vee S^1$  where $T=S^1\times S^1$ is a 2-torus. Let $E$ to be the covering 
space defined by the 2-fold covering of
$S^1$. Note that $E$ is homeomorphic to the circle with two tori $T$ attached at antipodal points.
Thus, $E$ is homotopy equivalent to $T\vee T\vee S^1$. By Theorem~\ref{wedge} and Lemma~\ref{weinberger}
$$TC(B)\le TC^M(T)+TC^M(S^1)-1=\cat (T)+\cat (S^1)-1=3+2-1=4.$$ On the other hand  by Proposition~\ref{wedge},
$$TC(E)\ge \cat((T\vee S^1)\times T))=3+3-1=5.$$

(2) Consider $X=(S^3\times S^3)\vee S^1$. Since $S^3\times S^3$ is a connected Lie group,
by Lemma~\ref{weinberger},
$TC^M(S^3\times S^3)=cat(S^3\times S^3)=3$. By Theorem~\ref{wedge} 
$$TC(X)\le TC^M(S^3\times S^3)+ TC^M(S^1)-1=3+2-1=4.$$ Note that
the universal cover $\tilde X$ is homotopy equivalent to an infinite wedge $Y=\stackrel{\infty}\bigvee (S^3\times S^3)$. Then 
$Y$ admits a retraction onto
$(S^3\times S^3)\vee(S^3\times S^3)$.  By Proposition~\ref{retraction}, Theorem~\ref{wedge}, and the cup-length lower bound on $\cat$,
$$TC(\tilde X)\ge TC((S^3\times S^3)\vee(S^3\times S^3))\ge \cat(S^3\times S^3\times S^3\times S^3)\ge 5.$$
\end{proof}

\section{Topological complexity,  LS-category, and Schwartz genus }

We say a subset $A\subset X$ can be {\em  $rel\ \infty$ contracted to infinity} if for every compact subset $F\subset X$ there is a larger compact set $F\subset C$ and a homotopy $h_t:A\to X$ with $h_0=1_A$, $h_1(A)\cap F=\emptyset$ and $h_t(a)=a$ for $a\in A\setminus C$.
\begin{defin}
We define the $ rel\  \infty$ category $\infty\text{-}\cat(X)$ of a locally compact space $X$
as the minimal $k$ such that there is a cover $X=V_1\cup\dots\cup V_k$ by closed subsets where each $V_i$ can be $rel \ \ \infty$ contracted to infinity.
\end{defin}

\begin{remark}\label{one-point}  
It follows from the definition that for every locally compact space $X$, $$\cat(\alpha X)\le \infty\text{-}\cat(X)$$ where $\alpha X$ is the one-point compactification.
\end{remark}

\begin{question}\label{one-point}
Does the equality $\cat(\alpha X)= \infty\text{-}\cat(X)$ hold for all locally finite complexes with  tame ends?
\end{question}
We recall that $X$ has a tame end if there is a compactum $C\subset X$ such that $X\setminus \Int(C)\cong\partial C\times[0,1)$.

In the case when $\alpha X$ is a closed manifold this question could be related to the difference between the
category and the ball-category for manifolds. We recall that for a closed $n$-manifold $M$, $ballcat(M)\le k$ is there is a cover of $M$ by $k$ closed topological $n$-dimensional balls.
\begin{prop}
For any closed $n$-manifold $M$ and any $x_0\in M$, $$\cat (M)\le\infty\text{-}\cat(M\setminus\{x_0\})\le ballcat(M)\le \cat(M)+1.$$
\end{prop}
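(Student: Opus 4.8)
The plan is to establish the chain of four inequalities separately, moving from the weakest objects to the strongest. The leftmost inequality $\cat(M)\le \infty\text{-}\cat(M\setminus\{x_0\})$ is immediate from Remark~\ref{one-point}, since the one-point compactification $\alpha(M\setminus\{x_0\})$ is canonically homeomorphic to $M$. The rightmost inequality $\cat(M)+1\ge ballcat(M)$ is also essentially known: take an open cover $U_1,\dots,U_k$ of $M$ with $k=\cat(M)$ by sets contractible in $M$; by a standard general-position / triangulation argument one may shrink these to a closed cover by sets that are themselves contractible, and then a closed contractible subset of a manifold can be thickened and engulfed into a closed ball, at the cost of possibly needing one extra ball to mop up what is left near a chosen point or along the boundaries of the pieces. (In the PL or smooth category one invokes a handle decomposition; in TOP one uses the appropriate engulfing theorem.)

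The main content is the middle inequality $\infty\text{-}\cat(M\setminus\{x_0\})\le ballcat(M)$. Here I would start from a cover of $M$ by closed $n$-balls $B_1,\dots,B_r$ with $r=ballcat(M)$. After an isotopy I may assume $x_0$ lies in the interior of $B_1$ and, more importantly, I may perturb the cover so that $x_0$ does not lie on any of the boundary spheres $\partial B_j$ and so that each $B_j$ either contains $x_0$ in its interior or is disjoint from a fixed small ball neighborhood of $x_0$. Set $V_j = B_j \cap (M\setminus\{x_0\})$ for $j=2,\dots,r$ and $V_1 = B_1\setminus\{x_0\}$; these are closed in $M\setminus\{x_0\}$ and cover it. For $j\ge 2$ the set $V_j=B_j$ is a closed ball disjoint from a neighborhood of $x_0$, hence it is contractible \emph{rel} $\infty$ to infinity trivially — indeed it is already contained in a compact set, so the required homotopy pushing it off any given compactum $F$ is obtained by first shrinking $F$-irrelevant pieces and using that a ball in the locally compact space $M\setminus\{x_0\}$ can be deformed within a slightly larger compact ball toward $\infty$... here one must be slightly careful: the precise requirement is that for every compact $F$ there is a larger compact $C$ and a homotopy $h_t$ of the inclusion $V_j\hookrightarrow M\setminus\{x_0\}$ with $h_1(V_j)\cap F=\emptyset$ and $h_t$ fixed outside $C$. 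Since $V_j$ is a closed ball contained in the \emph{interior} of a larger closed ball (chart) $B_j'\subset M\setminus\{x_0\}$ that is disjoint from $F$ after shrinking $F$ appropriately, we take $C=B_j'$ and use the standard radial deformation pushing the small ball out toward $\partial B_j'$, which is then continued into a collar. For $V_1 = B_1\setminus\{x_0\}$ the argument is the crucial one: $B_1$ is a closed ball with $x_0$ an interior point, so $V_1$ deformation retracts onto a small sphere around $x_0$, equivalently onto a small sphere near the puncture, i.e.\ near the added point $\infty$ of $\alpha(M\setminus\{x_0\})=M$; using the collar structure of the puncture one contracts $V_1$ \emph{rel} $\infty$ toward $\infty$. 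Concretely, identify a punctured neighborhood of $x_0$ with $S^{n-1}\times(0,1]$, so $M\setminus\{x_0\}$ has a tame end there; $B_1\setminus\{x_0\}$ retracts into this end and then slides out the end while being fixed outside a compact set containing $F$.

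The step I expect to be the genuine obstacle is making the puncture/collar manipulation fully rigorous in the topological category, where one does not automatically have collars of $\partial B_j$ or a product neighborhood of $x_0$ of the desired shape compatible with the given balls; this is where one must cite the relevant engulfing or collar-uniqueness theorems (Brown's collaring theorem, the annulus theorem) to arrange that the various $B_j$ near $x_0$ can be isotoped into standard position. A secondary subtlety is that the sets $V_j$ for $j\ge 2$ are \emph{closed balls} and the definition of $rel\ \infty$ contraction to infinity asks the homotopy to be stationary outside a compact set $C$; one must check the homotopy can be chosen to be the identity on $V_j\setminus C$, which is vacuous only if $V_j\subset C$ — so one must genuinely produce a compact $C$ containing both $F$ and $V_j$ and a homotopy of $V_j$ inside $C$ pushing it off $F$, which exists because $M\setminus\{x_0\}$ is connected (when $n\ge 2$) and a ball can be slid anywhere within a connected open manifold; the case $n=1$, $M=S^1$, is handled by inspection. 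Once these point-set arrangements are in place, the three inequalities assemble into the displayed chain.
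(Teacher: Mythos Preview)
Your approach is essentially the paper's: the first inequality comes from the remark on one-point compactifications, the last is cited as known (from \cite{CLOT}), and the middle inequality is obtained by taking a ball cover $B_1,\dots,B_m$ with $x_0\notin\partial B_i$ and showing each $B_i\setminus\{x_0\}$ is $rel\ \infty$ contractible to infinity in $M\setminus\{x_0\}$. The paper dispatches this last step in a single sentence.

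Where you overcomplicate: for balls $B_j$ with $x_0\notin B_j$, the set $V_j=B_j$ is \emph{compact} in $M\setminus\{x_0\}$, so once you take $C\supset F\cup B_j$ the condition ``$h_t$ fixed on $V_j\setminus C$'' is vacuous. You then only need \emph{any} homotopy of $B_j$ in $M\setminus\{x_0\}$ ending disjoint from $F$: contract $B_j$ to a point inside itself and slide that point along a path toward $x_0$ until it leaves $F$. There is no need for a larger ball $B_j'$, no shrinking of $F$, and no requirement that the homotopy stay inside $C$. For balls with $x_0\in\Int(B_j)$, identify $\Int(B_j)$ with $\R^n$ (sending $x_0$ to $0$) and use a radial push toward $0$ with a cutoff fixing a small punctured neighborhood of $0$; this needs only that $\Int(B_j)$ is an open ball and that interior points of a manifold have Euclidean neighborhoods, not the annulus theorem or Brown's collaring. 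Finally, your case split implicitly assumes $x_0$ lies in exactly one $B_j$, but several balls may contain $x_0$; just apply the punctured-ball argument to each of those. With these simplifications your argument coincides with the paper's.
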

\begin{proof}
In view of Remark~\ref{one-point} and some known fact about the ball-category~\cite{CLOT}, only the second inequality needs a proof. Let $ballcat(M)=m$ and let $B_1,\dots, B_m$ be a cover of $M$ by
topological closed $n$-balls such that $x_0\notin \partial B_i$ for all $i$. Then all
$B_i\setminus\{x_0\}$ can be $rel\ \infty$ contracted in $M\setminus\{x_0\}$ to $x_0$.
\end{proof}
Since the one-point compactification of $X\times X$ with the diagonal $\Delta X$ removed is the quotient space $(X\times X)/\Delta X$, the following theorem shows that Question~\ref{one-point} is closely related to characterization of the topological complexity $TC^M$ by means of  the LS-category.
\begin{thm}\label{tc=ls} 
For any compact ENR $X$,
$$cat((X\times X)/\Delta X)\le TC^M(X)\le \infty\text{-}\cat((X\times X)\setminus\Delta X).$$
\end{thm}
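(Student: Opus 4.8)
The plan is to prove the two inequalities separately, in each case translating a categorical cover into a monoidal motion-planning cover or vice versa, with the diagonal playing the role of the ``puncture'' on one side and the basepoint of the quotient on the other. For the left inequality $\cat((X\times X)/\Delta X)\le TC^M(X)$, I would start from an open cover $U_1,\dots,U_k$ of $X\times X$ with reserved sections $s_i:U_i\to PX$, where $k=TC^M(X)$. By Proposition~\ref{sections} each $U_i$ carries a strict deformation $D_i:U_i\times I\to X\times X$ to $\Delta X$ with $D_i(u,t)=u$ for $u\in\Delta X$. Passing to the quotient $q:X\times X\to (X\times X)/\Delta X$, whose basepoint I call $\ast$, the images $q(U_i)$ are open neighborhoods of $\ast$ (here I use that $\Delta X\subset U_i$ after the standard enlargement, exactly as in the reconciliation of the two definitions of $TC^M$ in the introduction) and the strict deformation $D_i$ descends to a contraction of $q(U_i)$ to $\ast$ in $(X\times X)/\Delta X$, because $D_i$ moves $\Delta X$ nowhere and so is compatible with the identification. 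Thus $q(U_1),\dots,q(U_k)$ is an open cover of $(X\times X)/\Delta X$ by sets contractible to a point, giving $\cat((X\times X)/\Delta X)\le k$.

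For the right inequality $TC^M(X)\le\infty\text{-}\cat((X\times X)\setminus\Delta X)$, set $Z=(X\times X)\setminus\Delta X$ and let $m=\infty\text{-}\cat(Z)$, so there is a cover $Z=V_1\cup\dots\cup V_m$ by closed subsets, each $rel\ \infty$ contractible to infinity. The key observation is that ``infinity'' in $Z$ is precisely the diagonal $\Delta X$ in $X\times X$: a compact exhaustion of $Z$ corresponds to shrinking closed neighborhoods of $\Delta X$. So for each $i$, applying the $rel\ \infty$ contraction with $F$ chosen to be the complement of a small tube around $\Delta X$, one gets a homotopy of $V_i$ inside $Z$ that pushes $V_i$ into an arbitrarily thin neighborhood $W$ of $\Delta X$ while being stationary outside a slightly larger compact set. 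Composing this with the local strict deformation $\phi:W\times[0,1]\to X$ of $W$ to $\Delta X$ supplied by the ANR property (exactly the map $\phi$ used in the proof of Theorem~2.1), one obtains a strict deformation of $V_i$ to $\Delta X$ in $X\times X$; the ``stationary near infinity'' clause guarantees the two pieces glue. One has to take an open enlargement $\widehat V_i\supset V_i$ on which the same deformation still works (using that $V_i$ is closed in $Z$ and the deformation is strict, so it extends over a neighborhood in $X\times X$, after possibly adjoining $\Delta X$ itself). Then $\widehat V_1,\dots,\widehat V_m$ is an open cover of $X\times X$ with strict deformations to $\Delta X$, so by Proposition~\ref{sections} each admits a reserved section, giving $TC^M(X)\le m$.

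The main obstacle, and the place I would spend the most care, is the gluing step in the second inequality: reconciling the $rel\ \infty$ contraction (which lives in $Z$, is stationary outside a compact set, but need not respect any particular collar of $\Delta X$) with the ANR deformation $\phi$ (which lives in a fixed neighborhood $W$ of $\Delta X$ in $X\times X$ and fixes $\Delta X$ pointwise). One needs that the $rel\ \infty$ contraction can be arranged to land inside $W$ and that, on the overlap region where it has become stationary, it agrees with the constant homotopy so that concatenation with $\phi$ is continuous; and one needs the passage from the closed cover $\{V_i\}$ of $Z$ to an open cover of $X\times X$, which requires an open-enlargement argument for strictly-deformable sets (the closed-to-open enlargement is routine for ANRs but must be checked to preserve strictness, so that Proposition~\ref{sections} still applies). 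The left inequality, by contrast, I expect to be essentially formal once the basepoint-neighborhood issue is handled.
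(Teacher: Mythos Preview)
Your proposal is correct and takes essentially the same approach as the paper: for the left inequality, convert reserved sections to strict deformations via Proposition~\ref{sections} and descend to the quotient; for the right inequality, use the $rel\ \infty$ contraction to push each $V_i$ into a tube $W$ around $\Delta X$, concatenate with the ANR deformation retraction of $W$ onto $\Delta X$, and extend by the identity over $\Delta X$ to get strict deformations of $\bar F_i=V_i\cup\Delta X$. The paper is terser about the gluing step you flag and works directly with the closed sets $\bar F_i$ rather than passing to open enlargements, but the substance is identical.
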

\begin{proof}
Suppose that $TC^M(X)=k$.  Then  by the definition there is an open cover $U_1,\dots, U_k$ of $X\times X$ with  continuous  reserved sections
$s_i:U_i\to PX$ of $\pi:PX\to X\times X$. By Proposition~\ref{sections} there are strict deformations of $U_i$ in $X\times X$ to the diagonal $\Delta X$. They define the deformations of $U_i/(U_i\cap\Delta X)$ to the point $\{\Delta X\}$ in $(X\times X)/\Delta X$.
Thus, $\cat((X\times X)/\Delta X)\le k$.

Let $\infty\text{-}cat((X\times X)\setminus\Delta X)=k$ and let $(X\times X)\setminus\Delta X=F_1\cup\dots\cup F_k$ be the union of $k$ closed sets $rel\ \infty$
contractible to infinity. Let $W$ be a neighborhood of the diagonal $\Delta X$ in $X\times X$ that admits a deformation retraction $r_t$ to $\Delta X$.
Let $h_t^i$ be a deformation of $F_i$ into $W$. Then the concatenation of $h^i_t$ and $r_t$ defines a  deformation $H_i$ of $F_i$ to the diagonal. Let $\bar F_i=F_i\cup\Delta X$. Note that $H_i$ together with identity on $\Delta X$ define a strict deformation of $\bar F_i$ to the diagonal.
\end{proof}

\begin{remark} For the topological complexity $TC(X)$ a weaker version of the first inequality from Theorem~\ref{tc=ls} was proven in~\cite{F2}, Lemma 18.3.$$\cat((X\times X)/\Delta X)-1\le TC(X).$$  
\end{remark}

The topological complexity of $X$ equals the Schwarz genus of a certain fibration. It turns out
that for general fibrations we still have the inequalities  similar to
Theorem~\ref{tc=ls}.
\begin{thm}\label{schwartz}
For any fibration of compact spaces $p:X\to Y$, $$\cat (C_p)-1\le sg(p)\le \infty\text{-}\cat(C_p\setminus\{\ast\}).$$
\end{thm}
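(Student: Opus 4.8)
The plan is to adapt the argument of Theorem~\ref{tc=ls}, replacing the specific fibration $\pi:PX\to X\times X$ with an arbitrary fibration $p:X\to Y$, and replacing the role of the diagonal $\Delta X\subset X\times X$ with the role of the base $Y$ sitting inside the mapping cylinder of $p$. Recall that the mapping cone $C_p$ is the mapping cylinder $M_p=(X\times I)\sqcup Y/\!\sim$ with $X\times\{1\}$ collapsed to a point $\ast$, so $Y\hookrightarrow C_p$ as the bottom of the cylinder, and $C_p\setminus\{\ast\}$ deformation retracts onto this copy of $Y$; moreover the one-point compactification of $C_p\setminus\{\ast\}$ is $C_p$ itself (with $\ast$ as the added point), since $p$ and hence $X$ is compact. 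This is the structural fact that makes the two inequalities parallel to Theorem~\ref{tc=ls}.

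For the left inequality $\cat(C_p)-1\le sg(p)$: suppose $sg(p)=k$, so there is an open cover $U_1,\dots,U_k$ of $Y$ with sections $\sigma_i:U_i\to X$. Each section $\sigma_i$, composed with the inclusion $X\hookrightarrow C_p$ and then slid up the cylinder, gives a homotopy $U_i\times I\to C_p$ from the inclusion $U_i\hookrightarrow Y\subset C_p$ to the constant map at $\ast$ — because $(u,t)\mapsto[\sigma_i(u),t]$ runs from $[\sigma_i(u),0]\sim u$ (using $p\sigma_i=\id$) up to $[\sigma_i(u),1]=\ast$. Hence each $U_i$ is null-homotopic in $C_p$. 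Adding one more set, a contractible neighborhood of $\ast$ in $C_p$ (the image of a neighborhood of the cone point), and noting the $U_i$ together with this extra set cover $C_p$, we get $\cat(C_p)\le k+1$, i.e. $\cat(C_p)-1\le sg(p)$. I would phrase this exactly as in Farber's Lemma~18.3 quoted in the remark after Theorem~\ref{tc=ls}, since the present statement is its natural generalization.

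For the right inequality $sg(p)\le\infty\text{-}\cat(C_p\setminus\{\ast\})$: suppose $\infty\text{-}\cat(C_p\setminus\{\ast\})=k$, with a closed cover $C_p\setminus\{\ast\}=F_1\cup\dots\cup F_k$ where each $F_i$ is $rel\ \infty$ contractible to infinity in $C_p\setminus\{\ast\}$. Since $C_p\setminus\{\ast\}$ deformation retracts onto $Y$, set $V_i=r(F_i)\cap Y$ where $r$ is the retraction, or rather work with the closed sets $\bar F_i$ obtained by the concatenation trick of Theorem~\ref{tc=ls}; intersecting the $F_i$ with $Y$ after deforming them toward $Y$ produces a closed cover $\{C_i\}$ of $Y$. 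The point of ``$rel\ \infty$ contractible to infinity,'' taking the compact set $F$ to be (a neighborhood of) $\ast$'s complement shrunk appropriately — more precisely, taking $F$ to be a compact set whose complement in $C_p\setminus\{\ast\}$ is a deleted neighborhood of $\ast$, which corresponds to the top collar $X\times(1-\eps,1)$ of the cylinder — is that $F_i$ can be pushed off $F$, i.e. pushed into the collar $X\times(1-\eps,1)\cong X\times(1-\eps,1)$, hence after projecting to $X\times\{1-\eps/2\}$ and composing with $X\xrightarrow{p} Y$ one reads off a map $C_i\to X$. The homotopy realizing ``$\,h_t(a)=a$ for $a\in A\setminus C$'' guarantees this can be arranged to be an honest section of $p$ over $C_i$ after the standard bookkeeping, exactly as the strict deformation in Theorem~\ref{tc=ls} yielded a reserved section. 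Passing from the closed cover $\{C_i\}$ to an open cover with sections (shrinking or enlarging using the Homotopy Lifting Property, Corollary~\ref{HLP}, since $p$ is a fibration) gives $sg(p)\le k$.

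The main obstacle is the second inequality: one must check carefully that the homotopy-theoretic ``contract to infinity'' data in $C_p\setminus\{\ast\}$ really produces a section of $p$ over a closed set of $Y$, and not merely a map into $X$ lifting $p$ up to homotopy. The key is to use the fixity clause $h_t(a)=a$ outside a compact set to control behavior near the cone point, translate ``near $\ast$'' into ``high in the cylinder $X\times I$,'' and then use the fibration property of $p$ (via Corollary~\ref{HLP}, applied to the CW pair formed by $Y$ and the closed piece) to rectify the homotopy lift into a strict section. Once that is in place, the rest is the cover-shrinking bookkeeping already used twice above and in Theorem~\ref{tc=ls}.
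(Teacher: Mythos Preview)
Your outline follows the paper's approach, and the core ideas are right; two points need tightening.

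For the left inequality, the sets $U_i\subset Y$ together with a cone-neighborhood of $\ast$ do \emph{not} form an open cover of $C_p$: the $U_i$ are open only in $Y$, and you have not covered the cylinder portion of $M_p$. The paper fixes this by replacing each $U_i$ with the partial mapping cylinder $\hat U_i=M_{p|_{p^{-1}(U_i)}}$, which is still contractible in $C_p$ (push down to $U_i$, then use your section homotopy); the $\hat U_i$ cover $M_p$, and adding $\Con(X)$ gives the $(k+1)$-st set.

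For the right inequality you correctly single out the Homotopy Lifting Property, but the role you assign to the fixity clause ``$h_t(a)=a$ for $a\in A\setminus C$'' is wrong. That clause speaks only about points \emph{near} $\ast$ (outside a compactum in $C_p\setminus\{\ast\}$), whereas the points of $F_i\cap Y$ over which you want a section lie well inside any such compactum; the clause tells you nothing there. What actually produces the section is exactly the rectification you mention at the end: set $F_i=V_i\cap Y$ (no preliminary deformation toward $Y$ is needed---these already cover $Y$), use the $rel\ \infty$ homotopy $H_i$ to push $F_i$ into $\Con(X)\setminus\{\ast\}$, project via $\pi:\Con(X)\setminus\{\ast\}\to X$ to get a lift of the time-$1$ map, and then lift the base homotopy $F_i\times I\to Y$ (obtained by composing $H_i$ with the retraction $C_p\setminus\{\ast\}\to Y$) backwards using the ordinary HLP for $p$. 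The lift at $t=0$ covers the inclusion $F_i\hookrightarrow Y$, hence is a section. Note this is Theorem~\ref{lift} (plain HLP), not Corollary~\ref{HLP}, which concerns extending fiber-wise homotopies of a given section and is not what is needed here.
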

\begin{proof} We claim that
if a subset $U\subset Y$ admits a section $s:U\to X$, then $U$ is contractible in $C_p$. Indeed, it can be moved to $X$ in the mapping cylinder $M_p$.
Since the cone $\Con(X)$ is contained in $C_p$, it could be further contracted to a point.
Moreover, the mapping cylinder $\hat U=M_{p|_{p^{-1}(U)}}$ of the restriction of $p$ to the preimage $p^{-1}(U)$ is contractible in $C_p$, since it can be pushed to $U$ first. If $Y$ is covered by $n$ open sets $U_1,\dots, U_n$ each of which admits a section of $p$, then the mapping cylinder $M_p$ can pe covered by $n$ sets $\hat U_1,\dots,\hat U_n$ all contractible in the mapping cylinder $C_p$. Since $C_p=M_p\cup \Con(X)$, the open enlargements of the sets  $\hat U_1,\dots,\hat U_n$, and $\Con(X)$ define an open cover of $C_p$ by $n+1$ elements all contractible in $C_p$. Hence $\cat(C_p)-1\le sg(p)$.

Suppose that $\infty\text{-}\cat(C_p\setminus\{\ast\})\le n$. Let $V_1,\dots V_n$ be a closed cover of $C_p\setminus\{\ast\}$
by sets that can be $rel\ \infty$ contracted to infinity. Let $$H_i:V_i\times I\to C_p\setminus\{\ast\}$$ be a contraction such that $$H_i(V_i\times 1)\subset \Con(X)\setminus\{\ast\}\subset C_p\setminus\{\ast\}.$$ We define $F_i=V_i\cap Y\subset C_p$. Let $\pi:\Con(X)\setminus\{\ast\}\to X$ be the projection.
By the Homotopy Lifting Property, the homotopy $p\circ H_i|_{F_i\times [0,1]}:F_i\times [0,1]\to Y$ has a lift $H_i':F_i\times[0,1]\to X$ which coincides with $\pi\circ H_i$ on $F_i\times 1$. Then $H_i'$ restricted to $F_i\times 0$ is a section of $p$ over $F_i$. Thus, $sg(p)\le \infty\text{-}\cat(C_p\setminus\{\ast\}).$
\end{proof}

The following example shows that neither of the two inequalities of Theorem~\ref{schwartz} can be improved.

\begin{ex} 
(1) For the identity map $1_X:X\to X$ in view of the equality $C_{1_X}=\Con(X)$ we obtain: $$\cat (C_{1_X})-1=0<sg(1_X)=1=\cat(Con(X))=\infty\text{-}\cat(C_{1_X}\setminus\{\ast\}).$$

For  the square map $p:S^1\to S^1$, $p(z)=z^2$,
$$\cat(C_p)-1=2=sg(p)<3=\cat(C_p)\le \infty\text{-}\cat(C_p\setminus\{\ast\}),$$ since $C_p=\R P^2$ and $\cat(\R P^2)=3.$
\end{ex}

\section{On the Arnold-Kuiper theorem}

\begin{thm}\label{cat of orbit} The non-reduced Lusternik-Schnirelmann category of the orbit space $\C P^2/\Z_2$ of the action of $\Z_2$ on the complex projective plane $\C P^2$ by the conjugation is $2$,
$$cat(\C P^2/\Z_2)=2.$$
\end{thm}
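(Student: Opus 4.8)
The plan is to prove the two inequalities $\cat(\cpp/\Z_2)\le 2$ and $\cat(\cpp/\Z_2)\ge 2$ directly, \emph{without} using the homeomorphism $\cpp/\Z_2\cong S^4$ — on the contrary, this computation is meant to feed into a proof of that homeomorphism (via the fact that a closed manifold of category $2$ is a homotopy sphere, together with Freedman's theorem). Let $c\colon\cpp\to\cpp$ denote the conjugation and $q\colon\cpp\to\cpp/\Z_2$ the quotient map. First I would record two elementary facts about the $c$-invariant projective line $\ell=\{[z_0:z_1:0]\}\cong\C P^1$: (i) $c$ restricts on $\ell=S^2$ to complex conjugation, an orientation-reversing reflection, so $q(\ell)$ is a closed hemisphere $\cong D^2$, in particular contractible; and (ii) on the affine chart $\cpp\setminus\ell\cong\C^2$ the map $c$ becomes coordinate-wise conjugation of $\C^2$, whose orbit space is homeomorphic to $\R^4$. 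For (ii), writing $w_j=x_j+iy_j$ one sees that coordinate-wise conjugation is the linear involution fixing the plane $\{y_1=y_2=0\}$ and acting by $-1$ on the complementary plane, so its quotient is $\R^2\times(\R^2/\{\pm1\})$, and $z\mapsto z^2$ identifies $\R^2/\{\pm1\}$ with $\R^2$. Thus $A:=q(\cpp\setminus\ell)=\cpp/\Z_2\setminus q(\ell)$ is an open subset homeomorphic to $\R^4$. (The same local model along the fixed set $\rpp\subset\cpp$ also exhibits $\cpp/\Z_2$ as a closed topological $4$-manifold, orientable because $c$ is orientation preserving; I may invoke this for the lower bound.)

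For the upper bound I would cover $\cpp/\Z_2$ by $A$ together with the image $N$ of a $c$-invariant open tubular neighborhood of $\ell$ in $\cpp$ (obtained by averaging a Riemannian metric over $\Z_2$ and using the normal exponential map). The set $A\cong\R^4$ is contractible, hence contractible in $\cpp/\Z_2$; and $N$ deformation retracts onto $q(\ell)\cong D^2$ because the fiberwise radial retraction of the disk bundle, being scaling by real numbers, commutes with the conjugate-linear $\Z_2$-action on the normal bundle and so descends to the quotient. Therefore $A$ and $N$ are open, contractible, and cover $\cpp/\Z_2$, which gives $\cat(\cpp/\Z_2)\le 2$.

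For the lower bound it suffices to show $\cpp/\Z_2$ is not contractible. I would compute $H^*(\cpp/\Z_2;\Q)\cong H^*(\cpp;\Q)^{c^*}$ by the transfer: $c^*$ acts on $H^2(\cpp;\Q)$ by $-1$, since it restricts to the orientation-reversing reflection on $\ell=S^2$, and hence acts on $H^4(\cpp;\Q)$, generated by the square of the degree-two class, by $(-1)^2=+1$; therefore $H^4(\cpp/\Z_2;\Q)\cong\Q\ne 0$. (Equivalently, $\cpp/\Z_2$ is a closed orientable $4$-manifold, so $H_4\cong\Z$; one could also quote $\cuplength\le\Cat$.) Thus $\cat(\cpp/\Z_2)\ge 2$, and combined with the upper bound, $\cat(\cpp/\Z_2)=2$.

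The main technical point to nail down is step (ii), namely that coordinate-wise conjugation on $\C^2$ has quotient $\R^4$, and, relatedly, that the invariant tubular neighborhood of $\ell$ and its radial retraction pass to the quotient. The subtlety is purely local: a single conjugation on $\C$ would produce a half-plane (a manifold with boundary), whereas here the fixed locus has codimension $2$, so the involution is a rotation by $\pi$ normal to it and no boundary appears — the substitution $z\mapsto z^2$ makes this precise. Once these identifications are in place, both inequalities are formal.
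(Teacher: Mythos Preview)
Your proof is correct, and in fact your decomposition coincides set-theoretically with the paper's: under the equivariant identification $\cpp\cong SP^2(S^2)$ (the paper's Proposition~\ref{known}), your line $\ell=\{[z_0:z_1:0]\}$ corresponds exactly to the set of unordered pairs containing $\infty$, so your $q(\ell)$ is the paper's closed piece $F$ and your $A$ is the paper's open piece $U=SP^2(\C)/\Z_2$. The difference is in packaging: the paper routes everything through the symmetric-square model, where contractibility of $U$ is immediate (an equivariant contraction of $\C$ induces one of $SP^2(\C)$) and $F$ is identified with a hemisphere by inspection of pairs $\{\infty,z\}$; you work directly in $\cpp$, computing $\C^2/\text{(conjugation)}\cong\R^4$ via $z\mapsto z^2$ and handling the neighborhood of $\ell$ with an invariant tubular neighborhood and its radial retraction. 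Your route is more self-contained (no need for Proposition~\ref{known}) and makes the local manifold structure of the quotient transparent along the way; the paper's route is shorter once the symmetric-square identification is in hand. Your explicit lower-bound argument via the transfer and $H^4(\cpp/\Z_2;\Q)\cong\Q$ is a useful addition --- the paper leaves $\cat\ge 2$ implicit.
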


\begin{cor}[Arnold, Kuiper]
The orbit space $\C P^2/\Z_2$ of the action of $\Z_2$ on the complex projective plane $\C P^2$ by the conjugation is a $4$-sphere.
\end{cor}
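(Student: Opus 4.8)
The plan is to prove that $\cat(\C P^2/\Z_2)=2$ and then invoke the classification of simply connected homology 4-spheres. First I would establish the lower bound $\cat(\C P^2/\Z_2)\ge 2$: the orbit space is not contractible (it is a closed 4-manifold, in fact even before identifying it we know it is simply connected and has nontrivial $H_4$), so its category is at least $2$. The real content is the upper bound $\cat(\C P^2/\Z_2)\le 2$, i.e.\ exhibiting two open sets, each contractible in $\C P^2/\Z_2$, that cover it.

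For the upper bound I would work with the conjugation $c:\C P^2\to\C P^2$, $c[z_0:z_1:z_2]=[\bar z_0:\bar z_1:\bar z_2]$, whose fixed point set is $\R P^2$. Let $q:\C P^2\to\C P^2/\Z_2$ be the quotient map. The key geometric fact is that $\C P^2$ minus a point, and more usefully $\C P^2$ minus a small ball around a fixed point, deformation retracts onto $\C P^1$ (a projective line) not meeting that point, and this retraction can be chosen $\Z_2$-equivariantly when the omitted region and the target line are chosen symmetrically (e.g.\ the line $\{z_0=0\}$ is $c$-invariant). Passing to the quotient, $q(\C P^1)$ is a copy of $\C P^1/\Z_2\cong S^2$, and the image $V_2$ of (an open equivariant neighborhood of) $\C P^2\setminus B$ deformation retracts onto $q(\C P^1)=S^2$. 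Now $S^2$ sits inside $\C P^2/\Z_2$ in such a way that it is null-homotopic there: in $\C P^2$ the line $\C P^1$ is not null-homotopic, but the class of $q(\C P^1)$ in $\pi_2(\C P^2/\Z_2)$ is $2$ times a generator only if $q_*$ is injective — rather, the correct statement is that $\C P^2/\Z_2$ is simply connected with $\pi_2=\Z$, and one checks that $q(\C P^1)$, being the quotient of a sphere by a free-on-the-complement involution with $\R P^1$-fixed set, represents a class that bounds. Concretely, I would instead take $V_1$ to be the image under $q$ of a small equivariant ball around a fixed point $p\in\R P^2\subset\C P^2$; since $\Z_2$ acts on this ball $B^4$ linearly (via the differential of $c$ at $p$, which is complex conjugation on $\C^2$, i.e.\ a linear involution with fixed plane $\R^2$), the quotient $B^4/\Z_2$ is itself a cone — it is homeomorphic to a cone on $S^3/\Z_2$ — hence contractible, and $V_1$ is an open contractible neighborhood of $q(p)$ in $\C P^2/\Z_2$. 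Then $V_1$ together with $V_2\simeq S^2$ cover $\C P^2/\Z_2$ provided $q(\C P^1)$ is contractible in $\C P^2/\Z_2$; and $q(\C P^1)\simeq S^2$ is contractible there because it is homotopic, inside the quotient, into a neighborhood of a point via pushing off along the normal direction of the line and using that the complement situation collapses.

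The main obstacle, and the step I would spend the most care on, is the verification that $q(\C P^1)$ is genuinely null-homotopic in $\C P^2/\Z_2$ — equivalently that $\cat$ does not get forced up to $3$. One clean way to see this: $\C P^2/\Z_2$ has the rational cohomology of $S^4$ (the $\Z_2$-action has fixed set $\R P^2$ and one computes $H^*(\C P^2/\Z_2;\Q)=H^*(\C P^2;\Q)^{\Z_2}$; conjugation acts by $-1$ on $H^2(\C P^2)$, killing it rationally, and by $+1$ on $H^4$), so by the cup-length bound $\cuplength\le\Cat$ gives no obstruction beyond $1$, and together with the dimension bound $\Cat\le\dim=4$ we only need to actually build the two sets. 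Since $V_1$ is honestly contractible and $V_2$ deformation retracts to the $2$-sphere $q(\C P^1)$, contracting $V_2$ in $\C P^2/\Z_2$ reduces to contracting that $2$-sphere; and $\pi_2(\C P^2/\Z_2)$ is detected by $H_2$, which we just saw is torsion, so the class of $q(\C P^1)$ has finite order — but in fact, examining the cell structure (the quotient is built from a $0$-cell, no $1$- or $2$-cells after the identification since $H_2$ is at most $\Z_2$, and the top cells), the $2$-sphere $q(\C P^1)$ is actually null-homotopic, not just of finite order, because the inclusion $S^2=q(\C P^1)\hookrightarrow\C P^2/\Z_2$ factors through the $2$-skeleton which is a point or a Moore space $M(\Z_2,2)$, and in either case the generator of $\pi_2(S^2)$ maps to zero after one more suspension's worth of room available in a $4$-dimensional target.

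Finally, with $\cat(\C P^2/\Z_2)=2$ in hand, the corollary follows: $\cat=2$ forces the space to have the homotopy type of a co-H-space, in particular $H_*$ in the middle degrees vanishes and the space is a homology sphere; more directly, a closed $4$-manifold with $\cat=2$ is covered by two open contractible sets, so it is homeomorphic to $S^4$ by gluing two balls along their boundary (after thickening, each contractible open set in a $4$-manifold is an open ball, and $S^4$ is the only manifold obtained as a union of two open balls), which is exactly the Arnold--Kuiper statement. The step needing the most justification in the corollary is that a contractible open subset of a topological $4$-manifold whose complement is also such is actually a coordinate ball; here I would cite that the quotient is simply connected (clear, as $\C P^2$ is and the action has fixed points) and a Poincaré sphere over $\Q$, then use that it is a homotopy $4$-sphere, hence $S^4$ by Freedman — but the point of this section is to avoid that heavy machinery, so instead I would argue directly that the decomposition into two contractible opens, each an interior of a $4$-cell by the $\cat=2$ structure and the handle-theoretic fact that a compact contractible $4$-manifold with simply connected boundary is a ball, realizes $\C P^2/\Z_2$ as $D^4\cup_{S^3}D^4=S^4$.
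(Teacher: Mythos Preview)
Your strategy matches the paper's --- prove $\cat(\C P^2/\Z_2)=2$ by exhibiting two contractible open sets, then deduce the corollary --- but the execution contains a genuine error and a confused final step.

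The error is the identification $\C P^1/\Z_2\cong S^2$. Conjugation on $\C P^1\cong\C\cup\{\infty\}$ is the reflection $w\mapsto\bar w$, with fixed set the equator $\R\cup\{\infty\}\cong S^1$, so the quotient is a hemisphere $D^2$, not a sphere. Ironically, the correct identification makes your argument work immediately: $V_2$ deformation retracts to a disk and is therefore contractible, so your entire discussion of whether $q(\C P^1)$ is null-homotopic becomes unnecessary. That discussion was not rigorous anyway --- knowing a class has finite order in $H_2$ does not make it null-homotopic, and the appeal to the $2$-skeleton is too vague to count as an argument. For comparison, the paper obtains essentially the same two-set cover via the equivariant identification $\C P^2\cong SP^2(S^2)$: it takes $U=SP^2(\C)/\Z_2$ (an affine chart, contractible since $\C$ contracts equivariantly) and its complement $F$ (unordered pairs containing $\infty$), which it identifies directly with a closed hemisphere $\bar D_+\cong D^2$. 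Under the roots-of-quadratics correspondence that $F$ is exactly $q$ of a conjugation-invariant line, so the two decompositions are close cousins; yours avoids the symmetric-product detour at the cost of checking the equivariant retraction of $\C P^2\setminus B$ onto a line.

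Your final paragraph is also confused. The paper does \emph{not} avoid heavy machinery: it verifies that the orbit space is a closed $4$-manifold (the involution preserves the normal bundle of the fixed $\R P^2$), cites the standard fact that a closed $n$-manifold of category $2$ is a homotopy $S^n$, and then invokes Freedman's theorem to upgrade this to a homeomorphism. Your attempts to bypass Freedman fail: a contractible open subset of a topological $4$-manifold need not be an open ball (the Whitehead manifold embeds in $S^4$), and the ``handle-theoretic fact'' you cite about compact contractible $4$-manifolds with simply connected boundary is itself a consequence of Freedman's work, not an elementary substitute for it. Just use Freedman, as the paper does and as its subsequent remark acknowledges is unavoidable for the topological statement.
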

\begin{proof} Clearly, the fixed point set of this action is a real projective plane 
$$\R P^2=\{[a:b:c]\mid a,b,c\in\R, |a|+|b|+|c|\ne 0\}\subset$$
$$\subset\{[a:b:c]\mid a,b,c\in\C, |a|+|b|+|c|\ne 0\}=\C P^2.$$
Moreover, the action preserves the normal bundle to $\R P^2$.
Therefore, the orbit space $\C P^2/\Z_2$ is a 4-manifold. A closed $n$-manifold of the category 2 is homotopy equivalent to the $n$-sphere (see~\cite{CLOT}).
Then by Freedman's theorem~\cite{Fr}, $\C P^2/\Z_2$   is homeomorphic to the 4-sphere.
\end{proof}

\begin{remark}
We note that Arnold and Kuiper proved a diffeomorphism theorem. Since the smooth 4-dimensional Poincare conjecture is still a conjecture,
here we can provide only a homeomorphism.
\end{remark}

We identify the 2-sphere $S^2$ with the one-point compactification $\C\cup\infty$ of the complex plane. Then $\Z_2$-action on $\C$ by the
 conjugation extends to an action on $S^2$. Clearly, a $\Z_2$-action on $S^2$ extends to an action on the symmetric $n$th power $SP^n(S^2)$ of $S^2$. We recall that $SP^nX=X^n/\Sigma_n$ is the orbit space
 on the $n$th power $X^n$ under the action of the symmetric group $\Sigma_n$ by  permutation of coordinates.

\begin{prop}\label{known}
There is a $\Z_2$-equivariant homeomorphism between complex projective space $\C P^2$ and the symmetric square $SP^2(S^2)$.
\end{prop}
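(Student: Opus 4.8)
\section*{Proof proposal for Proposition~\ref{known}}

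The plan is to produce the classical identification of $SP^2(\C P^1)$ with the projective space of binary quadratic forms and then check that it carries one conjugation action to the other. Identify $S^2$ with $\C P^1$ via $z\leftrightarrow[z:1]$, $\infty\leftrightarrow[1:0]$; under this identification the conjugation $z\mapsto\bar z$ on $S^2=\C\cup\{\infty\}$ becomes the coordinate-wise conjugation $\gamma\colon[z_0:z_1]\mapsto[\bar z_0:\bar z_1]$ on $\C P^1$. To an unordered pair $\{[u_0:u_1],[v_0:v_1]\}\in SP^2(\C P^1)$ I assign the binary quadratic form $(u_1x-u_0y)(v_1x-v_0y)=ax^2+bxy+cy^2$, i.e.\ $a=u_1v_1$, $b=-(u_1v_0+u_0v_1)$, $c=u_0v_0$, and set $\Phi(\{[u_0:u_1],[v_0:v_1]\})=[a:b:c]\in\C P^2$. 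First I would verify that $\Phi$ is well defined (the point $[a:b:c]$ is independent of the scalings of the homogeneous coordinates of each point and of the ordering of the two points) and continuous, since $a,b,c$ are integer bilinear expressions in $(u_0,u_1)$ and $(v_0,v_1)$ that are symmetric under interchanging the two pairs; thus $\Phi$ factors through $(\C P^1)^2\to(\C P^1)^2/\Sigma_2=SP^2(\C P^1)$ to a continuous map.

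Next I would show $\Phi$ is bijective: by the fundamental theorem of algebra every nonzero binary quadratic form splits, uniquely up to a nonzero scalar and the order of the factors, into two linear forms, whose zeros in $\C P^1$ constitute the unique preimage of $[a:b:c]$ under $\Phi$. Since $SP^2(S^2)$ is compact and $\C P^2$ is Hausdorff, the continuous bijection $\Phi$ is a homeomorphism.

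Finally I would check $\Z_2$-equivariance. The $\Z_2$-action on $SP^2(S^2)$ sends $\{p,q\}$ to $\{\gamma p,\gamma q\}$, while the conjugation action on $\C P^2$ sends $[a:b:c]$ to $[\bar a:\bar b:\bar c]$. Because $a,b,c$ are polynomials with \emph{real} coefficients in the homogeneous coordinates $u_0,u_1,v_0,v_1$, replacing each coordinate by its complex conjugate replaces $[a:b:c]$ by $[\bar a:\bar b:\bar c]$; hence $\Phi(\{\gamma p,\gamma q\})=\overline{\Phi(\{p,q\})}$, so $\Phi$ is equivariant. This result is classical, so there is no serious obstacle; the one step deserving attention is the opening identification --- that conjugation on $S^2=\C\cup\{\infty\}$ corresponds precisely to coordinate-wise conjugation on $\C P^1$, and hence that $\Phi$ intertwines the two actions --- after which the argument is routine bookkeeping about binary forms.
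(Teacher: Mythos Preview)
Your proof is correct and follows essentially the same approach as the paper: both identify $\C P^2$ with the space of nonzero binary quadratic forms up to scalar and use the correspondence between such forms and unordered pairs of their roots in $\C P^1\cong S^2$. The paper presents the map in the opposite direction (from $[a:b:c]$ to the root pair via factorization) and is considerably terser, whereas you spell out well-definedness, continuity, the compact--Hausdorff argument, and the equivariance check in more detail; the underlying idea is the same.
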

\begin{proof}
The points  $[a:b:c]\in\C P^2$ are in bijection with non-degenerate quadratics $ax^2+bxy+cy^2$. Any factorization of this quadratic
 $$ax^2+bxy+cy^2=(a_1x+b_1y)(a_2x+b_2y)$$ defines the same non-ordered (perhaps repeated) pairs of points 
$$\frac{a_1}{b_1},\frac{a_2}{b_2}\in \C\cup\infty=S^2.$$
Note that the non-degeneration condition $|a|+|b|+|c|\ne 0$ implies that $a_i$ and $b_i$  cannot be all equal zero for $i=1,2$.
Also we use  the standard convention $\frac{z}{0}=\infty$ for any $z\in\C$.

This correspondence is the required homeomorphism.
\end{proof}

\begin{remark} The above proposition is an equivariant version of the well-known fact: $\C P^n\cong SP^n(S^2)$.
\end{remark}

\

{\em Proof of Theorem~\ref{cat of orbit}.}
We present $M=SP^2(S^2)/\Z_2=F\cup U$ as a union of two contractible sets one closed and one open.
Note that the set $U=SP^2(\C)/\Z_2$ is open and contractible, since $\C$ is contractible to a point equivariantly.
The equator $S^1=\R\cup\infty\subset S^2$ separates $S^2$ in two hemispheres $D_-$ and $D_+$. We show that the complement $F=M\setminus U$
admits a continuous bijection onto the closed upper hemisphere $\bar D_+$. Indeed, it consists of non-ordered pairs of pairs
$\{\infty,z\}$, $\{\infty,\bar z\}$ where $z\in \bar D_+$. This defines the bijection which is clearly continuous. Since $F$ is compact, it is homeomorphic to $\bar D_+$
and hence is contractible. Since $F$ is an absolute retract and $M$ is absolute neighborhood retract, there is an open neighborhood $V$ of $F$ in $M$
that contracts to $F$ in $M$ and, hence, to a point. Thus, $M$ is covered by two open sets $U$ and $V$, both contractible in $M$. \qed

\end{document}